\documentclass[a4paper, 12pt,oneside,reqno]{amsart}
\usepackage[a4paper]{geometry}
\usepackage{amssymb,amsmath,amsthm}
\usepackage[T2A]{fontenc}
\usepackage[breaklinks=true]{hyperref}

\usepackage{xcolor}

\usepackage{bbm}
\usepackage{graphicx}

\DeclareSymbolFont{symbolsC}{U}{pxsyc}{m}{n}
\SetSymbolFont{symbolsC}{bold}{U}{pxsyc}{bx}{n}
\DeclareFontSubstitution{U}{pxsyc}{m}{n}
\DeclareMathSymbol{\medcirc}{\mathbin}{symbolsC}{7}
\DeclareMathSymbol{\medbullet}{\mathbin}{symbolsC}{8}

\def\dim{\mathrm{dim}}

\let\<\langle
\let\>\rangle

\newtheorem{definition}{Definition}
\newtheorem{lemma}{Lemma}

\newtheorem{theorem}{Theorem}
\newtheorem{corollary}{Corollary}
\newtheorem{remark}{Remark}
\newtheorem{example}{Example}

\title{Free Novikov-Zinbiel algebra}

\author{A. Dauletiyarova, F. Mashurov, and B. Sartayev}

\address{Institute of Mathematics and Mathematical Modeling, Almaty, Kazakhstan}

\address{Narxoz University, Almaty, Kazakhstan}

\email{d\_aigera95@mail.ru}

\address{SICM, Southern University of Science and Technology, Shenzhen, 518055, China}

\address{SDU University, Kaskelen, Kazakhstan}

\email{f.mashurov@gmail.com}

\address{Narxoz University, Almaty, Kazakhstan}

\address{SDU University, Almaty, Kazakhstan}

\email{baurjai@gmail.com}

\subjclass[2020]{17A30, 17A50}
\keywords{Rota-Baxter operator, Novikov algebra, Zinbiel algebra, polynomial identities}

\thanks{${}^{*}$Corresponding author: Bauyrzhan Sartayev   (baurjai@gmail.com)}

\begin{document}

\begin{abstract}
Let $A$ be a commutative-associative algebra with an invertible derivation $D$, and put $R=D^{-1}$. We study the operations
\begin{equation*}
x\succ y=R(x)y,\qquad x\prec y=xD(y),
\end{equation*}
which define a Novikov-Zinbiel algebra. Using a simple graded model, we represent multilinear $\prec,\succ$-monomials by rational functions and construct an explicit basis for the resulting space. This yields a description of the multilinear components of the free Novikov-Zinbiel algebra. As a consequence, we prove that every multilinear identity satisfied by this construction follows from the defining identities of Novikov-Zinbiel algebras.
\end{abstract}

\maketitle

\section{Introduction}

Novikov algebras appeared in the study of Hamiltonian operators and Poisson brackets of hydrodynamic type \cite{BalNovikov1985,GelDorfman1979}. Their structure was further studied in \cite{BurdeGraaf2013,KaygorodovMashurovNamZhang2024,Osborn1992,Xu2001,Zelmanov1987}.  Recent developments also include pre-Novikov bialgebras, quasi-Frobenius structures, and their affinization constructions; see \cite{LiHong2025,LiHong2026}. One of the standard constructions of a Novikov algebra starts with a commutative-associative algebra equipped with a derivation. Up to the choice of the opposite convention, the corresponding product has the form
\begin{equation*}
x\prec y=xD(y).
\end{equation*}
The differential realization is also closely related to the structure of free Novikov algebras. Bases, identities, and representations of free Novikov algebras were studied, in particular, in \cite{DzhIsmailov2014,DzhLofwall2002}.

Zinbiel algebras were introduced by Loday under the name of dual Leibniz algebras \cite{Loday1995}. They are Koszul dual to Leibniz algebras \cite{GinzKapranov1994,LodayVallette2012}. Various structural properties and identities of Zinbiel algebras were studied in \cite{DzhTulenbaev2005,NaurUmirbaev2010}. Zinbiel-type constructions are also naturally related to Rota--Baxter operators \cite{DzhZinbiel2007,GuoKeigher2008}.

The Novikov and Zinbiel constructions become closely related when the derivation is invertible. Let $A$ be a commutative-associative algebra with an invertible derivation $D$, and put $R=D^{-1}$. Then
\begin{equation*}
R(x)R(y)=R\bigl(xR(y)+R(x)y\bigr),
\end{equation*}
so $R$ satisfies the Rota--Baxter identity of weight zero. Thus the derivation and its inverse naturally define two operations on the same algebra:
\begin{equation*}
x\succ y=R(x)y,\qquad x\prec y=xD(y).
\end{equation*}
Related constructions involving differential and Rota--Baxter operators and Novikov structures have been studied in \cite{GaoGuoHanZhang2025}. Constructions combining derivation and integration-type operators in nonassociative algebras were also considered in \cite{KaygorodovUzakbaev2026}. Related constructions involving derivations and Rota--Baxter operators, as well as operadic properties of Novikov-type varieties, have also been studied in \cite{KolesnikovMashurovSartayev2024,KolesnikovSartayev2026}.

A direct computation shows that $\succ$ satisfies the Zinbiel identity, while $\prec$ satisfies the two Novikov identities. Moreover, the two operations satisfy three compatibility identities. We call an algebra satisfying these six cubic identities a Novikov-Zinbiel algebra. 



The main problem of this paper is to determine whether  these six identities completely describe the identities of the above construction. Our main result gives a positive answer in the multilinear case: every multilinear identity satisfied by
\begin{equation*}
x\succ y=R(x)y,
\qquad
x\prec y=xD(y)
\end{equation*}
follows from the Zinbiel identity, the two Novikov identities, and the three compatibility identities. In order to prove this result, we also obtain an explicit description of the multilinear components of the free Novikov-Zinbiel algebra.

Our main tool is a rational function model. We consider
\begin{equation*}
A=t\Bbbk[t],
\qquad
D=t\frac{d}{dt}.
\end{equation*}
For positive integers $p$ and $q$, we have
\begin{equation*}
t^p\prec t^q=q\,t^{p+q},
\qquad
t^p\succ t^q=\frac{1}{p}t^{p+q}.
\end{equation*}
Hence, after the substitution $x_i=t^{p_i}$, every multilinear $\prec,\succ$-monomial is represented by a rational function in $p_1,\ldots,p_n$. More precisely, every occurrence of $\prec$ contributes the weight of a subtree to the numerator, while every occurrence of $\succ$ contributes the inverse of a subtree weight. Thus the problem of polynomial identities is translated into a problem about a concrete space of rational functions.

The first main result is an explicit basis of this rational function space. The basis is described by strict chains of subsets of $[n]$ together with polynomial monomials attached to the corresponding blocks. The proof uses a signed rooted-tree representation of the rational functions, cancellation of denominator factors, partial fraction decompositions, and residues. As an application, we obtain a recurrence for the dimensions of the multilinear components.

We then return to the free Novikov-Zinbiel algebra. For every element of the rational basis we construct a canonical lift to the free algebra. We prove that these lifts form a basis and that the rational function model is faithful in every multilinear degree. This gives the completeness of the six defining cubic identities.

The paper is organized as follows. We first derive the defining identities and give several examples. We then introduce the rational function model, construct its basis, and obtain the dimension recurrence. In the final section, we construct normal forms in the free Novikov-Zinbiel algebra and prove the completeness of the defining identities.

\section{Algebras with an invertible derivation}

Let $\Bbbk$ be a field of characteristic zero and let $A$ be a commutative associative $\Bbbk$-algebra. We assume that $A$ is endowed with an invertible derivation $D$, and denote its inverse by $R=D^{-1}$. Thus $DR=RD=\mathrm{id}$. We do not assume that $A$ is unital. Indeed, if $A$ has a unit, then $D(1)=0$, and hence $D$ cannot be invertible unless $A=0$.

We define two binary operations on $A$ by
\[
x\succ y=R(x)y,\qquad x\prec y=xD(y).
\]

Our aim is to describe the polynomial identities satisfied by these operations. We start with multilinear identities of degree three. First, observe that
\[
R(x)R(y)=R\bigl(xR(y)+R(x)y\bigr).
\]
Indeed, by the Leibniz rule, $D(R(x)R(y))=xR(y)+R(x)y$, and the equality follows after applying $R=D^{-1}$. This gives
\[
x\succ(y\succ z)=(x\succ y)\succ z+(y\succ x)\succ z.
\]
As expected, the operation $\succ$ satisfies the Zinbiel identity.

Now consider the operation $\prec$. Since $(x\prec y)\prec z=xD(y)D(z)$, commutativity of the multiplication in $A$ gives
\[
(x\prec y)\prec z=(x\prec z)\prec y.
\]
Further, $x\prec(y\prec z)=xD(yD(z))=xD(y)D(z)+xyD^2(z)$. Hence
\[
(x\prec y)\prec z-x\prec(y\prec z)=-xyD^2(z).
\]
The right-hand side is symmetric in $x$ and $y$, and therefore
\[
(x\prec y)\prec z-x\prec(y\prec z)=(y\prec x)\prec z-y\prec(x\prec z).
\]
Thus $\prec$ is a Novikov product.

We next consider the mixed identities. From the definitions,
$(x\succ y)\prec z=R(x)yD(z)$ and
$x\succ(y\prec z)=R(x)yD(z)$. Therefore
\[
(x\succ y)\prec z=x\succ(y\prec z).
\]
For the next relation, we compute
$x\prec(y\succ z)=xD(R(y)z)$. Since $D(R(y))=y$, we obtain
\[
x\prec(y\succ z)=xyz+xR(y)D(z)=xyz+(y\succ x)\prec z.
\]
The ordinary product $xyz$ can also be expressed in terms of
$\prec$ and $\succ$. Indeed,
\[
(x\prec y)\succ z+(y\prec x)\succ z=R\bigl(xD(y)+yD(x)\bigr)z=R(D(xy))z=xyz.
\]
Hence
\[
x\prec(y\succ z)=(x\prec y)\succ z+(y\prec x)\succ z+(y\succ x)\prec z.
\]
Since the monomial $xyz$ can be written by operations $\succ$ and $\prec$, the expression of $xyz$ is symmetric in $x,y,z$. Thus
\[
(x\prec y)\succ z+(y\prec x)\succ z=(x\prec z)\succ y+(z\prec x)\succ y.
\]

\begin{definition}
An algebra $A$ with two binary operations $\prec$ and $\succ$ is called a Novikov-Zinbiel algebra if $A$ is a Novikov algebra with respect to $\prec$, a Zinbiel algebra with respect to $\succ$, and the following compatibility identities hold:
\begin{equation}\label{eq:NZ1}
(x\succ y)\prec z=x\succ(y\prec z),
\end{equation}
\begin{equation}\label{eq:NZ2}
x\prec(y\succ z)=(x\prec y)\succ z+(y\prec x)\succ z+(y\succ x)\prec z,
\end{equation}
and
\begin{equation}\label{eq:NZ3}
(x\prec y)\succ z+(y\prec x)\succ z=(x\prec z)\succ y+(z\prec x)\succ y.
\end{equation}
\end{definition}

\begin{example}
Let $A=\bigoplus_{n\geq 1}A_n$ be a positively graded commutative associative algebra such that $A_iA_j\subseteq A_{i+j}$. Define $D$ on homogeneous elements by $D(a)=na$ for $a\in A_n$. Then $D$ is an invertible derivation. Its inverse is given by $R(a)=\frac{1}{n}a$ for $a\in A_n$.

Hence, for homogeneous elements $a\in A_p$ and $b\in A_q$, the operations
\begin{equation}\label{eq:gradedNZ}
a\prec b=q\,ab,\qquad a\succ b=\frac{1}{p}ab
\end{equation}
define a Novikov-Zinbiel algebra structure on $A$.
\end{example}

The following examples are particular cases of the previous construction.

\begin{example}\label{ex2}
Let $A=t\Bbbk[t]$ with the standard grading $\deg t^n=n$. Then
$A=\bigoplus_{n\geq 1}\Bbbk t^n$. The derivation from the previous example is
\[
D=t\frac{d}{dt},
\]
since $D(t^n)=nt^n$. Therefore $R(t^n)=\frac{1}{n}t^n$, and
\begin{equation}\label{eq:onevarNZ}
t^p\prec t^q=q\,t^{p+q},\qquad t^p\succ t^q=\frac{1}{p}t^{p+q}.
\end{equation}
Thus this example is the one-dimensional graded case of the general construction above, where each homogeneous component is one-dimensional.
\end{example}

\begin{example}
For $N\geq 1$, let
\[
A_N=t\Bbbk[t]/(t^{N+1}).
\]
It inherits the positive grading $A_N=\bigoplus_{n=1}^N \Bbbk t^n$. Hence it is again a particular case of the first example. Writing $e_i=t^i$, we have
\[
D(e_i)=i e_i,\qquad R(e_i)=\frac{1}{i}e_i.
\]
The corresponding Novikov-Zinbiel products are
\begin{equation*}
e_i\prec e_j=
\begin{cases}
j e_{i+j}, & i+j\leq N,\\
0, & i+j>N,
\end{cases}
\end{equation*}
and
\begin{equation*}
e_i\succ e_j=
\begin{cases}
\frac{1}{i}e_{i+j}, & i+j\leq N,\\
0, & i+j>N.
\end{cases}
\end{equation*}
Thus $A_N$ is a finite-dimensional truncation of the previous example. For example, if $N=3$, the nonzero products are 
\[
e_1\prec e_1=e_2,\qquad e_1\prec e_2=2e_3,\qquad e_2\prec e_1=e_3,\]
\[ e_1\succ e_1=e_2,\qquad e_1\succ e_2=e_3,\qquad e_2\succ e_1=\frac{1}{2}e_3.
\]
\end{example}

\begin{example}
Let
\[
A=(x_1,\ldots,x_m)\Bbbk[x_1,\ldots,x_m],
\]
and let $w_1,\ldots,w_m$ be positive integers. Give $A$ the weighted grading determined by $\deg x_i=w_i$. Then a monomial $x^\alpha=x_1^{\alpha_1}\cdots x_m^{\alpha_m}$ has degree
\[
|\alpha|_w=\sum_{i=1}^m w_i\alpha_i.
\]
The derivation corresponding to this grading is the weighted Euler derivation

\[
D=\sum_{i=1}^m w_i x_i\frac{\partial}{\partial x_i}\;\;\textrm{and}\;\;D(x^\alpha)=|\alpha|_w x^\alpha.
\]
Since all weights $w_i$ are positive and $A$ contains no nonzero elements of weighted degree zero, the operator $D$ is invertible on $A$. Thus this is again a particular case of the first example. Since all nonzero homogeneous components have positive degree, $D$ is invertible on $A$, and
\begin{equation*}
x^\alpha\prec x^\beta=|\beta|_w x^{\alpha+\beta},\qquad
x^\alpha\succ x^\beta=\frac{1}{|\alpha|_w}x^{\alpha+\beta}.
\end{equation*}
For $m=1$ and $w_1=1$, this reduces to the algebra $t\Bbbk[t]$ from Example \ref{ex2}.
\end{example}

\begin{remark}
One may also consider the operations
\[
x\prec_1 y=xR(y),\qquad x\prec_2 y=xD(y).
\]
However, this does not lead to a new operad. Indeed, by commutativity of the underlying associative product,
\[
x\succ y=R(x)y=yR(x)=y\prec_1 x,
\qquad
x\prec y=x\prec_2 y.
\]
Therefore the operad with operations $\prec_1$ and $\prec_2$
is isomorphic to the Novikov-Zinbiel operad. Consequently, there is no need to study this variant separately.
\end{remark}

\section{The rational function model}

To study multilinear identities we use the algebra $A=t\Bbbk[t]$ with the derivation $D=t\frac{d}{dt}$ from Example \ref{ex2}. For $n\geq 1$, we have $D(t^n)=nt^n$ and $R(t^n)=\frac{1}{n}t^n$. Therefore,
\begin{equation*}
t^p\prec t^q=q\,t^{p+q},\qquad t^p\succ t^q=\frac{1}{p}t^{p+q}.
\end{equation*}

Let $T(x_1,\ldots,x_n)$ be a nonassociative monomial in the operations $\prec$ and $\succ$. Substituting $x_i=t^{p_i}$, we obtain
\begin{equation*}
T(t^{p_1},\ldots,t^{p_n})=c_T(p_1,\ldots,p_n)t^{p_1+\cdots+p_n},
\end{equation*}
where $c_T(p_1,\ldots,p_n)$ is a rational function.

For a submonomial $U$, denote by $p_U$ the sum of the variables $p_i$ corresponding to the leaves of $U$. If $T=U\prec V$ or $T=U\succ V$, then
\begin{equation*}
c_{U\prec V}=p_Vc_Uc_V,\qquad c_{U\succ V}=\frac{c_Uc_V}{p_U}.
\end{equation*}
Thus the rational function corresponding to a monomial can be read directly from its binary tree: every vertex labelled by $\prec$ contributes the weight of its right subtree, while every vertex labelled by $\succ$ contributes the inverse of the weight of its left subtree.

In degree three, putting $x=t^p$, $y=t^q$ and $z=t^r$, we obtain

\begin{equation*}
\begin{array}{c|cccc}
T
&(x\succ y)\succ z
&x\succ(y\succ z)
&(x\succ y)\prec z
&x\succ(y\prec z)
\\ \hline
c_T
&\frac1{p(p+q)}
&\frac1{pq}
&\frac rp
&\frac rp
\\[4mm]
T
&(x\prec y)\succ z
&x\prec(y\succ z)
&(x\prec y)\prec z
&x\prec(y\prec z)
\\ \hline
c_T
&\frac q{p+q}
&\frac{q+r}{q}
&qr
&r(q+r)
\end{array}
\end{equation*}
Together with all permutations of $p,q,r$, these rational functions represent all multilinear monomials of degree three. Hence every multilinear identity gives a linear relation between the corresponding rational functions.

More generally, let $\mathcal F(n)$ denote the multilinear component of the free algebra with two binary operations $\prec$ and $\succ$. The monomial model defines a linear map
\begin{equation*}
\rho_n:\mathcal F(n)\longrightarrow \Bbbk(p_1,\ldots,p_n),\qquad \rho_n(T)=c_T.
\end{equation*}
The kernel of $\rho_n$ is the space of multilinear identities satisfied by the operations $\prec$ and $\succ$ in the monomial model, while its image is the space spanned by the rational functions $c_T$.

For $n=3$, we have $\dim(\mathcal F(3))=48$.
The mixed component in degree three consists of monomials containing one occurrence of $\prec$ and one occurrence of $\succ$. Up to permutations of the variables, the corresponding rational functions are of the following three types:
\begin{equation*}
\frac{r}{p},\qquad \frac{q}{p+q},\qquad \frac{q+r}{q}.
\end{equation*}
So, the mixed component is spanned by the following ten functions:
\begin{equation}\label{eq:mixed-basis-degree-three}
1,\qquad
\frac pq,\frac pr,\frac qp,\frac qr,\frac rp,\frac rq,\qquad
\frac{p}{p+q},\frac{p}{p+r},\frac{q}{q+r}.
\end{equation}
Indeed, the functions in \eqref{eq:mixed-basis-degree-three} satisfy
\begin{equation*}
\frac{q}{p+q}=1-\frac{p}{p+q},\;\;\;
\frac{r}{p+r}=1-\frac{p}{p+r},\;\;\;
\frac{r}{q+r}=1-\frac{q}{q+r}.
\end{equation*}

These ten functions from \eqref{eq:mixed-basis-degree-three} are linearly independent. Indeed, the first seven functions are Laurent monomials and are linearly independent. The last three functions have poles on the three different hyperplanes
$p+q=0$, $p+r=0$, and $q+r=0$, respectively, while the first seven functions are regular on these hyperplanes.

For example, suppose that
\[
a\frac{p}{p+q}
+b\frac{p}{p+r}
+c\frac{q}{q+r}
+f(p,q,r)=0,
\]
where $f$ is a linear combination of the first seven functions in \eqref{eq:mixed-basis-degree-three}. Multiplying by $p+q$ and setting $q=-p$, we obtain $ap=0$, and hence $a=0$. In the same way, using the hyperplanes $p+r=0$ and $q+r=0$, we get $b=c=0$. It follows that $f=0$, and therefore all coefficients are zero. Consequently,
\begin{equation*}
\dim\mathcal{R}_{3,1}=10.
\end{equation*}

The rational functions corresponding to these $48$ monomials span a space of dimension $22$. Hence
\begin{equation*}
\dim(\ker\rho_3)=48-22=26.
\end{equation*}

On the other hand, the $S_3$-orbits of the Zinbiel identity, the two Novikov identities, and the three compatibility identities \eqref{eq:NZ1}--\eqref{eq:NZ3} span a $26$-dimensional subspace of $\mathcal F(3)$. Therefore, these identities generate all multilinear identities of degree three.

In particular, if $\mathcal{NZ}$ denotes the operad of Novikov-Zinbiel algebras, then
\begin{equation*}
\dim\mathcal{NZ}(3)=22.
\end{equation*}
Thus, the defining identities of a Novikov-Zinbiel algebra form a complete system of multilinear identities of degree three for the construction coming from an invertible derivation.

It is useful to note that not every homogeneous rational function of degree zero belongs to the mixed component. For example,
\begin{equation*}
\frac{r}{p+q}\notin \mathcal{R}_{3,1}.
\end{equation*}
Indeed, consider a linear relation over $\Bbbk$ of the form
\begin{equation*}
\frac{r}{p+q}=a\frac{p}{p+q}+g(p,q,r),
\end{equation*}
where $g$ is a linear combination of the remaining functions in
\eqref{eq:mixed-basis-degree-three}. None of the denominators occurring
in $g$ is divisible by $p+q$. Hence $g$ is regular at a generic point
of the hyperplane $p+q=0$.
Multiplying both sides by $p+q$ and restricting to
$p+q=0$, we obtain
\begin{equation}\label{eq:r-ap}
r=ap.
\end{equation}
This is impossible, since $a\in\Bbbk$ and $p$ and $r$ are independent
variables on the hyperplane $p+q=0$. Therefore
$\frac{r}{p+q}$ does not belong to $\mathcal{R}_{3,1}$.

\section{A basis of the rational function model}

For every nonempty subset $I\subseteq[n]=\{1,\ldots,n\}$, put $p_I=\sum_{i\in I}p_i$. Let
\begin{equation*}
\mathcal R(n)=\operatorname{Im}\rho_n
\subseteq\Bbbk(p_1,\ldots,p_n).
\end{equation*}
We decompose $\mathcal R(n)$ according to the number of occurrences of
$\prec$:
\begin{equation*}
\mathcal R(n)=\bigoplus_{k=0}^{n-1}\mathcal R_{n,k},
\end{equation*}
where $\mathcal R_{n,k}$ is spanned by the rational functions
corresponding to multilinear monomials containing exactly $k$
occurrences of $\prec$. Since every such monomial contains $n-1-k$
occurrences of $\succ$, every element of $\mathcal R_{n,k}$ is
homogeneous of degree
\begin{equation*}
k-(n-1-k)=2k-n+1.
\end{equation*}

We now define a set $\mathcal B_{n,k}$ of rational functions. Choose a
strict chain
\begin{equation*}
\varnothing=I_0\subsetneq I_1\subsetneq\cdots
\subsetneq I_j\subsetneq I_{j+1}=[n]
\end{equation*}
and put
\begin{equation*}
B_s=I_s\setminus I_{s-1},
\qquad
1\leq s\leq j+1.
\end{equation*}
Choose integers $d_1,\ldots,d_{j+1}$ satisfying
\begin{equation}\label{eq:basis-degree-conditions}
0\leq d_s\leq |B_s|-1,
\qquad
d_s\equiv |B_s|-1\pmod 2,
\end{equation}
and
\begin{equation}\label{eq:basis-total-degree}
d_1+\cdots+d_{j+1}=2k+j-n+1.
\end{equation}
For each $1\leq s\leq j$, let $M_s$ be a monomial of total degree
$d_s$ in the variables
\begin{equation*}
\{p_a\mid a\in B_s\setminus\{\max B_s\}\},
\end{equation*}
and let $M_{j+1}$ be a monomial of total degree $d_{j+1}$ in the
variables
\begin{equation*}
\{p_a\mid a\in B_{j+1}\}.
\end{equation*}
To these data we associate the rational function
\begin{equation}\label{eq:basis-element-Rnk}
\frac{M_1M_2\cdots M_{j+1}}
{p_{I_1}p_{I_2}\cdots p_{I_j}}.
\end{equation}
Let $\mathcal B_{n,k}$ be the set of all functions obtained in this
way.

\begin{theorem}\label{thm:basis}
For fixed $n$ and $k$, the set $\mathcal B_{n,k}$ is a basis of $\mathcal R_{n,k}$. Consequently, $\bigcup_{k=0}^{n-1}\mathcal B_{n,k}$ is a basis of $\mathcal R(n)$.
\end{theorem}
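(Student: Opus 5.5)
The plan is to prove linear independence of $\mathcal B_{n,k}$ and the inclusion $\mathcal B_{n,k}\subseteq\mathcal R_{n,k}$ directly. Spanning will then come from an upper bound, either via a reduction algorithm or via a dimension count by induction on $n$. As a sanity check, for $n=3$, $k=1$ the construction gives the ten functions of \eqref{eq:mixed-basis-degree-three}: the chain of length $j=0$ gives $1$, the chains with $|I_1|=1$ give $p_a/p_i$, and the chains with $|I_1|=2$ give $p_a/(p_a+p_b)$ with $a=\min$. So the statement agrees with the computation of $\dim\mathcal R_{3,1}$.

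\emph{Independence.} I would argue by iterated residues along flags, as in the degree-three argument with the hyperplanes $p+q=0$. Fix a chain $I_1\subsetneq\cdots\subsetneq I_j$. Multiply a putative relation by $p_{I_1}\cdots p_{I_j}$ and restrict to the flag $p_{I_1}=\cdots=p_{I_j}=0$. Use the variables $p_a$ with $a\neq\max B_s$ for $s\le j$, together with all $p_a$ with $a\in B_{j+1}$, as coordinates on this linear subspace; they are independent there because $p_{\max B_s}$ is determined by $p_{I_s}-p_{I_{s-1}}$.

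Any basis element whose chain is not a refinement containing all the $I_s$ has fewer poles along this flag, and therefore vanishes after restriction. Ordering chains by length from longest to shortest and inducting, the surviving terms are exactly the monomials $M_1\cdots M_{j+1}$ for this chain. These are distinct monomials in the chosen coordinates, so their coefficients vanish.

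\emph{Membership and spanning.} Write $c_T$ as a product of $p_V$ over the $\prec$-vertices and $p_U^{-1}$ over the $\succ$-vertices. The denominators form a laminar family of subsets. The identity
\[
\frac1{p_Ip_J}=\frac1{p_{I\cup J}}\Bigl(\frac1{p_I}+\frac1{p_J}\Bigr),\qquad I\cap J=\varnothing,
\]
converts each laminar product into a sum of terms with nested (chain) denominators. In each such term I would eliminate $p_{\max B_s}$ from the numerator via
\[
p_{\max B_s}=p_{I_s}-p_{I_{s-1}}-\sum_{a\in B_s\setminus\{\max B_s\}}p_a .
\]
The first two summands cancel a denominator factor and yield a shorter chain, so the reduction terminates by induction on the chain length.

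What remains is to show that the exponents land in the window \eqref{eq:basis-degree-conditions}. The bound $d_s\le |B_s|-1$ should come from counting $\prec$-vertices inside a block. The parity condition $d_s\equiv|B_s|-1\pmod 2$ is the hard part: it must reflect that each internal vertex within a block contributes $\pm1$ to the degree, and the reduction must be shown to preserve these block degrees. Rather than fight this head-on, I would first try the alternative route. Prove that the span $V_{n,k}$ of $\mathcal B_{n,k}$ is stable under the recursive rules $c_{U\succ V}=c_Uc_V/p_U$ and $c_{U\prec V}=p_Vc_Uc_V$, by induction on $n$ with $c_U\in V$ and $c_V\in V$. Then show conversely that every basis element is $c_T$ of an explicit tree, or a combination of such. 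For the latter I would build the tree block by block, roughly as a right comb of $\succ$ along the chain with the blocks realized as Novikov-type subtrees carrying $M_s$. Either way, controlling the parity and degree constraints under these operations is the main obstacle.
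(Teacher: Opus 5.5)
Your independence argument is the paper's own (Lemma~\ref{lem:basis-independent}): take a chain of maximal length, change to coordinates $q_s=p_{B_s}$, and take iterated residues. Keep the residues sequential: multiply by $q_s$ and restrict to $q_s=0$ only after $q_1=\dots=q_{s-1}=0$. A one-shot restriction to the whole flag is undefined for terms such as $M/p_{B_2}$. At each step, comparability inside a chain is what shows that $p_{I_s}$ is the only surviving factor that vanishes. The genuine gaps are in membership and, above all, in spanning.

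\emph{Membership.} A pure $\prec$-subtree on $r$ variables always has numerator of degree exactly $r-1$. So ``Novikov-type subtrees carrying $M_s$'' only reach $d_s=|B_s|-1$, and already $1\in\mathcal B_{3,1}$ is missed. What is missing is a degree-lowering gadget that creates no denominator. The paper uses $(U\prec V)\succ W+(V\prec U)\succ W$, whose coefficient is $c_Uc_Vc_W$ (Lemma~\ref{lem:hidden-pair}). Each application adjoins two variables to a block at no cost, and this is exactly where the parity condition comes from. You also need that pure $\prec$-coefficients span all homogeneous polynomials of degree $r-1$ (Lemma~\ref{lem:pure-prec-polynomials}). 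Finally, the blocks must be joined by the left-normed comb $(\cdots(P_1\succ P_2)\succ\cdots)\succ P_{j+1}$; the right-normed one produces $p_{B_1}\cdots p_{B_j}$ instead of $p_{I_1}\cdots p_{I_j}$.

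\emph{Spanning.} Here you stop at the obstacle. In your first route, once you straighten to chains, the numerator factors $p_V$ straddle several blocks. Distributing them and eliminating $p_{\max B_s}$ then creates out-of-window terms that cancel only globally. Take $T=x_1\prec((x_2\succ x_3)\succ x_4)$, so $c_T=\frac{p_2+p_3+p_4}{p_2(p_2+p_3)}$ with blocks $\{2\},\{3\},\{1,4\}$. The summand $p_2$ gives $\frac1{p_2+p_3}$, whose blocks $\{2,3\},\{1,4\}$ both have degree $0$ and the wrong parity. The summand $p_3$ gives $\frac1{p_2}-\frac1{p_2+p_3}$, and the bad terms happen to cancel. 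Your own residue argument shows that all chain fractions with reduced monomials of arbitrary degree are independent. So proving that such coefficients always vanish is the whole content of spanning, and nothing in your procedure guarantees it.

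The paper avoids this by keeping block data from the start:
\begin{itemize}
\item Contract the positive edges of the signed tree (Lemma~\ref{lem:signed-tree-form}). Each block $C$ then carries $P_C$ of degree exactly $|C|-1$, in the ring generated by $p_a$, $a\in C$, and the subtree sums of its child edges.
\item Cancel negative-edge denominators before straightening, by splitting $P_C=P_C|_{X=0}+XP_C^{(1)}$. Each cancellation merges two blocks into one of degree $d_C+d_{C'}-1$, which preserves both the bound and the parity (Lemma~\ref{lem:negative-edge-contraction}).
\item Only then pass to chains (Lemma~\ref{lem:tree-to-chain}) and eliminate $p_{\max B_s}$; this merges adjacent blocks by the same degree rule (Lemma~\ref{lem:chain-reduction}).
\end{itemize}

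Your stability route can also be closed, but you do not do it. For $\prec$, let $J_1\subsetneq\cdots\subsetneq J_l$ be the chain of $c_V$ and split $p_V=p_{V\setminus J_l}+p_{J_l}$. The second summand cancels $p_{J_l}$, and in both terms the final blocks of $U$ and $V$ merge after the shuffle into one admissible final block. For $\succ$, the factor $p_U^{-1}$ makes $U$'s last block nonfinal, and the $\max$-elimination handles it.
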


The description of the basis has a simple interpretation. The strict chain $I_1\subsetneq\cdots\subsetneq I_j$ records the denominator factors $p_{I_1}\cdots p_{I_j}$, while the blocks $B_s=I_s\setminus I_{s-1}$ determine the polynomial contributions to the numerator.

If $|B_s|=r$, then the maximal polynomial degree contributed by this block is $r-1$, and each cancellation of a $\succ$ with a $\prec$ lowers the degree by two. Hence
\[
0\leq d_s\leq |B_s|-1,
\qquad
d_s\equiv |B_s|-1\pmod 2.
\]
Since \eqref{eq:basis-element-Rnk} has total degree $d_1+\cdots+d_{j+1}-j$,
the condition that it belong to $\mathcal R_{n,k}$ is precisely
\[
d_1+\cdots+d_{j+1}=2k+j-n+1.
\]
For a nonfinal block $B_s$, the relation
\[
p_{B_s}=\sum_{a\in B_s}p_a=0
\]
allows one variable to be eliminated; we choose $p_{\max B_s}$. Thus $M_s$ involves the variables indexed by
$B_s\setminus{\max B_s}$. For the final block no such relation occurs, so $M_{j+1}$ may involve all variables indexed by $B_{j+1}$.

\begin{example}
Let $n=3$ and $k=1$. Then every element of $\mathcal R_{3,1}$ is homogeneous of degree $2k-n+1=0$. We determine the elements of $\mathcal B_{3,1}$ directly from the chain description.

First, let $j=0$. Then the chain is simply
\begin{equation*}
\varnothing=I_0\subsetneq I_1=[3],\;\textrm{so}\;B_1=\{1,2,3\}.
\end{equation*}
The total degree condition gives $d_1=2k-n+1=0$. Thus $M_1=1$, and the corresponding basis element is $1$.
This function is indeed obtained from $\prec$ and $\succ$. For
example,
\begin{equation*}
\rho_3\bigl((x\prec y)\succ z+(y\prec x)\succ z\bigr)=1.
\end{equation*}

Now let $j=1$. We have a chain
\begin{equation*}
\varnothing\subsetneq I_1\subsetneq I_2=[3],\;\textrm{with}\; B_1=I_1,\; B_2=[3]\setminus I_1.
\end{equation*}
The total degree condition becomes
$d_1+d_2=1$.

Suppose first that $|B_1|=1$ and $|B_2|=2$.
The conditions \eqref{eq:basis-degree-conditions} force
$d_1=0$ and $d_2=1$. Thus $M_1=1$, while $M_2$ is any variable belonging to $B_2$.

For $I_1=B_1=\{1\}$ and $B_2=\{2,3\}$, we obtain
\begin{equation*}
\frac{q}{p},
\qquad
\frac{r}{p}.
\end{equation*}
Similarly, the choices $I_1=\{2\}$ and $I_1=\{3\}$ give
\begin{equation*}
\frac{p}{q},
\qquad
\frac{r}{q},
\qquad
\frac{p}{r},
\qquad
\frac{q}{r}.
\end{equation*}

Suppose now that $|B_1|=2$ and $|B_2|=1$. Then the degree conditions force $d_1=1$ and $d_2=0$. For
$I_1=B_1=\{1,2\}$ and $B_2=\{3\}$, the nonfinal block $B_1$ uses only the variables
\begin{equation*}
\{p_a\mid a\in B_1\setminus\{\max B_1\}\}=\{p\}.
\end{equation*}
Hence $M_1=p$, and we obtain
\begin{equation*}
\frac{p}{p+q}.
\end{equation*}
The choices $I_1=\{1,3\}$ and $I_1=\{2,3\}$ similarly give
\begin{equation*}
\frac{p}{p+r},
\qquad
\frac{q}{q+r}.
\end{equation*}

There are no basis elements with $j=2$. Indeed, in this case all three
blocks are singletons, so the degree conditions force
\begin{equation*}
d_1=d_2=d_3=0,
\end{equation*}
while \eqref{eq:basis-total-degree} would require
\begin{equation*}
d_1+d_2+d_3=2.
\end{equation*}
Thus
\begin{equation*}
\mathcal B_{3,1}
=
\left\{
1,\,
\frac{q}{p},\frac{r}{p},
\frac{p}{q},\frac{r}{q},
\frac{p}{r},\frac{q}{r},\,
\frac{p}{p+q},
\frac{p}{p+r},
\frac{q}{q+r}
\right\}.
\end{equation*}
Therefore $\dim\mathcal R_{3,1}=10$. For $n=3$, the three homogeneous components have dimensions
\begin{equation*}
\dim\mathcal R_{3,0}=6,
\qquad
\dim\mathcal R_{3,1}=10,
\qquad
\dim\mathcal R_{3,2}=6,
\end{equation*}
and hence
\begin{equation*}
\dim\mathcal R(3)=22.
\end{equation*}
\end{example}

The same construction gives
\begin{equation*}
\dim\mathcal R(n)
=
1,\ 4,\ 22,\ 152,\ 1291,\ 13156,
\end{equation*}
for $n=1,\ldots,6$.

\section{The proof of Theorem \ref{thm:basis}}

Let $\mathcal B_{n,k}$ be the set of rational functions defined in the
previous section. We first prove several lemmas.

\begin{lemma}\label{lem:pure-prec-polynomials}
Let $B$ be a set of $r$ variables. The coefficients of multilinear
polynomials involving only $\prec$ span the space of all homogeneous
polynomials of degree $r-1$ in the variables $p_a$, $a\in B$.
\end{lemma}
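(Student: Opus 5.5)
The plan is to use the explicit shape of the coefficient of a pure-$\prec$ monomial and argue by induction on $r=|B|$. By the rule $c_{U\prec V}=p_Vc_Uc_V$, a multilinear $\prec$-monomial $T$ on the leaves $B$ has coefficient
\[
c_T=\prod_{v}p_{R(v)},
\]
where $v$ runs over the $r-1$ internal vertices of the binary tree of $T$ and $R(v)$ is the set of leaves in the right subtree of $v$. Thus every $c_T$ is a homogeneous polynomial of degree $r-1$, and it remains to show that these polynomials span all homogeneous polynomials of degree $r-1$ in $p_a$, $a\in B$. For $r=1$ the only monomial is a leaf and $c=1$, so the claim holds.

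\emph{Left-normed monomials.} For the inductive step I would first isolate the left-normed monomials $(\cdots((x_{a}\prec x_{b_1})\prec x_{b_2})\cdots)\prec x_{b_{r-1}}$. Their coefficients are $p_{b_1}\cdots p_{b_{r-1}}$, where the $b_i$ are distinct elements of $B\setminus\{a\}$. Because $a$ can be chosen freely, these give exactly the squarefree monomials of degree $r-1$ in the $r$ variables. Monomials with repeated factors must come from nested right subtrees.

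\emph{Induction.} Write $T=U\prec V$ and apply the inductive hypothesis to $V$, so that $c_V$ ranges over all homogeneous polynomials of degree $|V|-1$ in the leaves of $V$. Then $c_{U\prec V}=c_U\,p_V\,c_V$ ranges over $c_U\cdot p_V\cdot\mathrm{Pol}_{|V|-1}(V)$. The simplest choice is $U$ a single leaf $x_a$ and $V$ on $C=B\setminus\{a\}$, which yields $p_C\cdot \mathrm{Pol}_{r-2}(C)$. Combining this with left-multiplication structures, namely $T=(U'\prec W)$ with $U'$ of several leaves, produces more factors. To reach a target monomial $p^\alpha$ of degree $r-1$, I would argue by induction on the number of distinct variables involved, or equivalently on the number of variables missing from $p^\alpha$. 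At least one variable $a\in B$ does not appear in $p^\alpha$, because the degree is $r-1<r$ only when some exponent vanishes or the monomial is squarefree; in fact, by counting, some $p_a$ is absent unless $p^\alpha$ is squarefree with exactly one variable missing, which is again the case of some absent variable. Choosing such an absent $a$ as the leftmost leaf, I would try to realize $p^\alpha$ as $c_{x_a\prec V}$ modulo previously obtained elements. The difficulty is that $p_C\,\mathrm{Pol}_{r-2}(C)$ only gives multiples of $p_C$, not all of $\mathrm{Pol}_{r-1}(C)$.

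The main obstacle is therefore showing that $\mathrm{Pol}_{r-1}(C)$ with $|C|=r-1$ is spanned. I would handle it with a second induction on trees of the form $(x_a\prec V_1)\prec V_2\prec\cdots$, whose coefficients are $p_{V_1}c_{V_1}\,p_{V_2}c_{V_2}\cdots$ with $V_1\sqcup V_2\sqcup\cdots=C$. By induction these span $\prod_i p_{V_i}\mathrm{Pol}_{|V_i|-1}(V_i)$. I would then show that the sum over all ordered set partitions of $C$ of these spaces is all of $\mathrm{Pol}_{r-1}(C)$. The argument is a triangularity one: order monomials $p^\alpha$ on $C$ with support $S$, and use the partition of $C$ into singletons and one block to produce $p^\alpha$ plus terms with larger support. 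The singleton factors $p_c$ handle exponents equal to $1$, and a block $V$ contributes $p_V\cdot(\text{anything of degree }|V|-1)$, whose leading term can be arranged to be $p_{c}^{m}$ times lower terms. A downward induction on the support size then finishes the argument; this bookkeeping is where I expect the real work to be.
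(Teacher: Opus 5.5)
Your route differs from the paper's. The paper simply invokes the standard differential realization of the free Novikov algebra: the multilinear component on $B$ is spanned by $\prod_{a\in B}D^{\alpha_a}(x_a)$ with $\sum_a\alpha_a=r-1$, and the substitution $x_a=t^{p_a}$ sends these onto all monomials $\prod_a p_a^{\alpha_a}$. An elementary induction would be a fine substitute for that citation as far as this lemma goes. It gives spanning only; it does not recover the injectivity of the realization, which the paper uses later in Lemma~\ref{lem:anchored-novikov} and Lemma~\ref{lem:block-normal-form}.

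Your reduction is correct. Some $a\in B$ is absent from $p^\alpha$ because $\deg p^\alpha=r-1<|B|$. By induction, the products $(\cdots(x_a\prec V_1)\prec\cdots)\prec V_m$ have coefficients spanning $\prod_i p_{V_i}\mathrm{Pol}_{|V_i|-1}(V_i)$. However, the step that carries all the content is left undone: you still need that these spaces, over all set partitions of $C=B\setminus\{a\}$, exhaust $\mathrm{Pol}_{|C|}(C)$. The mechanism you sketch for it fails.

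Concretely, ``singletons and one block'' cannot produce $p^\alpha$ plus terms of larger support once two exponents are $\geq 2$. Put $S=\operatorname{supp}\alpha$. A singleton $\{s\}$ gives every term $p_s$-exponent exactly $1$, so necessarily $s\in S$ and $\alpha_s=1$. Hence both high-exponent variables lie in the block $V$. The part of $p_VQ$ involving only variables of $S\cap V$ equals $p_{S\cap V}Q'$, where $Q'$ is the part of $Q$ in those variables. Triangularity would then force $p_{S\cap V}Q'=\prod_{c\in S\cap V}p_c^{\alpha_c}$. This is impossible, because the linear form $p_{S\cap V}$ with $|S\cap V|\geq 2$ divides no monomial; an example is $p_1^2p_2^2$ on $C=\{1,2,3,4\}$.

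The missing idea is to use exactly one block per support variable. Since $\deg p^\alpha=|C|$, one has $\sum_{c\in S}(\alpha_c-1)=|C\setminus S|$, so $C\setminus S$ splits into sets $G_c$, $c\in S$, with $|G_c|=\alpha_c-1$. Put $V_c=\{c\}\cup G_c$. Since $|V_c|=\alpha_c\leq r-1$, induction gives a pure $\prec$-polynomial $W_c$ on $V_c$ with $c_{W_c}=p_c^{\alpha_c-1}$. Then $T=(\cdots(x_a\prec W_{c_1})\prec\cdots)\prec W_{c_s}$ satisfies
\[
c_T=\prod_{c\in S}\Bigl(p_c^{\alpha_c}+\sum_{g\in G_c}p_g\,p_c^{\alpha_c-1}\Bigr)=p^\alpha+\bigl(\text{monomials whose support strictly contains }S\bigr).
\]
Downward induction on $|S|$ then finishes the proof; for $S=C$ this $T$ is left-normed and $c_T=p^\alpha$ exactly. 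With this step supplied, your argument becomes a complete, self-contained proof.
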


\begin{proof}
By the standard differential realization of the free Novikov algebra \cite{DzhLofwall2002},
its multilinear component is identified with the span of the
differential monomials
\begin{equation}\label{eq:pure-prec-differential}
\prod_{a\in B}D^{\alpha_a}(x_a),
\qquad
\sum_{a\in B}\alpha_a=r-1.
\end{equation}
Under the substitution $x_a=t^{p_a}$, we have
$D^{\alpha_a}(t^{p_a})=p_a^{\alpha_a}t^{p_a}$.
Hence \eqref{eq:pure-prec-differential} has coefficient $\prod_{a\in B}p_a^{\alpha_a}$. These are precisely all monomials of degree $r-1$. Therefore they span the whole space of homogeneous polynomials of degree $r-1$.
\end{proof}

The following elementary cancellation will be used repeatedly.

\begin{lemma}\label{lem:hidden-pair}
Let $U,V,W$ involve pairwise disjoint sets of variables. Put
\begin{equation*}
\mu(U,V,W)
=
(U\prec V)\succ W+(V\prec U)\succ W.
\end{equation*}
Then
\begin{equation}\label{eq:mu-coefficient}
c_{\mu(U,V,W)}=c_Uc_Vc_W.
\end{equation}
\end{lemma}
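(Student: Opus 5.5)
The proof is a direct computation from the recursive rules for coefficients, $c_{U\prec V}=p_Vc_Uc_V$ and $c_{U\succ V}=c_Uc_V/p_U$. These rules hold for arbitrary submonomials $U,V$ on disjoint variable sets, so we apply them with $U,V,W$ themselves as the subtrees rather than single variables.

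First compute $c_{U\prec V}=p_V\,c_Uc_V$ and $c_{V\prec U}=p_U\,c_Vc_U$. The left factor of the outer $\succ$ in $(U\prec V)\succ W$ has leaf set the union of the leaves of $U$ and $V$. Its weight is therefore $p_{U\prec V}=p_U+p_V$, and the same holds for $V\prec U$. Hence
\[
c_{(U\prec V)\succ W}=\frac{p_V\,c_Uc_Vc_W}{p_U+p_V},\qquad
c_{(V\prec U)\succ W}=\frac{p_U\,c_Uc_Vc_W}{p_U+p_V}.
\]

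Adding the two terms, the numerators combine to $(p_U+p_V)c_Uc_Vc_W$. The factor $p_U+p_V$ cancels, which gives $c_{\mu(U,V,W)}=c_Uc_Vc_W$. This cancellation is the generalization of the degree-three identity $(x\prec y)\succ z+(y\prec x)\succ z\mapsto 1$ to arbitrary subtrees.

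The only point needing care is that $\rho_n$ is linear and the coefficient rules are genuinely multiplicative over subtrees. This holds because substituting $x_i=t^{p_i}$ turns any submonomial $U$ into $c_U\,t^{p_U}$, and $c_U$ does not depend on the surrounding tree. Disjointness of the variable sets of $U$, $V$ and $W$ guarantees that $\mu(U,V,W)$ is multilinear, so $\rho_n$ applies. No real obstacle is expected.
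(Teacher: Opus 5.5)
Your proposal is correct and follows the paper's proof: you obtain $\frac{p_V}{p_U+p_V}c_Uc_Vc_W$ and $\frac{p_U}{p_U+p_V}c_Uc_Vc_W$ from the recursive rules for $\prec$ and $\succ$, and their sum is $c_Uc_Vc_W$. Your appeal to the linearity of $\rho_n$ also covers the case where $U$ is a polynomial rather than a monomial, which is how the lemma is used in Lemma~\ref{lem:block-realization}.
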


\begin{proof}
We have
\begin{equation*}
c_{(U\prec V)\succ W}=\frac{p_V}{p_U+p_V}c_Uc_Vc_W\;\;\textrm{and}\;\;c_{(V\prec U)\succ W}=\frac{p_U}{p_U+p_V}c_Uc_Vc_W.
\end{equation*}
Adding the two expressions gives \eqref{eq:mu-coefficient}.
\end{proof}

\begin{lemma}\label{lem:block-realization}
Let $B$ be a set of $r$ variables and let $d$ satisfy
\begin{equation*}
0\leq d\leq r-1,\qquad d\equiv r-1\pmod 2.
\end{equation*}
Then every monomial $M$ of degree $d$ in the variables $p_a$, $a\in B$, is the coefficient of a multilinear $\prec,\succ$-polynomial on the variables indexed by $B$. This polynomial contains exactly
\begin{equation*}
\frac{r-1-d}{2}
\end{equation*}
occurrences of $\succ$.
\end{lemma}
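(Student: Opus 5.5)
We prove the statement by induction on $m=\frac{r-1-d}{2}$, the number of occurrences of $\succ$, using Lemma \ref{lem:pure-prec-polynomials} for the base case and Lemma \ref{lem:hidden-pair} for the inductive step.

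\emph{Base case.} If $m=0$, then $d=r-1$. By Lemma \ref{lem:pure-prec-polynomials}, every monomial of degree $r-1$ in the variables $p_a$, $a\in B$, is the coefficient of a multilinear polynomial involving only $\prec$. Such a polynomial has no occurrences of $\succ$, as required.

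\emph{Inductive step.} Suppose $m\geq 1$. Then $d\leq r-3$, so $r\geq 3$. Since $M$ has degree $d\leq r-3<r$, some variable of $B$ does not occur in $M$. After discarding the variables absent from $M$, there are at least $r-d\geq 3$ of them. Choose three such distinct indices $u,v,w\in B$ with $p_u,p_v,p_w$ not dividing $M$. Put $B'=(B\setminus\{u,v,w\})\cup\{\ast\}$, where $\ast$ is a new letter, so that $|B'|=r-2$. The monomial $M$ is a monomial of degree $d$ in the variables of $B'$, and
\[
d\leq (r-2)-1,\qquad d\equiv (r-2)-1\pmod 2.
\]
By induction, $M$ is the coefficient of a multilinear polynomial $P'$ on $B'$ containing $m-1$ occurrences of $\succ$. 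Here we regard $M$ as not involving $p_\ast$.

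\emph{Substitution.} Substitute $x_\ast\mapsto\mu(x_u,x_v,x_w)$ in $P'$. The key point is that the coefficient map is compatible with substituting a subtree for a leaf, as long as the weights match. A subtree $U$ contributes its own factor $c_U$, and its weight $p_U$ enters wherever the leaf weight $p_\ast$ appeared. Under the substitution, the weight $p_\ast$ becomes $p_u+p_v+p_w$. By Lemma \ref{lem:hidden-pair}, with $U,V,W$ single letters, $c_\mu=1$.

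\emph{Main obstacle.} The danger is that $P'$'s coefficient might depend on $p_\ast$ through $\succ$- or $\prec$-weights of subtrees containing $\ast$. In that case replacing $p_\ast$ by $p_u+p_v+p_w$ is not harmless as a function, even though $M$ itself does not involve $p_\ast$. The resolution is to make the substitution at the level of rational functions. The coefficient of $P'$ is $M$, which is identically independent of $p_\ast$. Hence $\rho$ of the substituted polynomial is $M$ evaluated with $p_\ast=p_u+p_v+p_w$, multiplied by $c_\mu=1$, and this equals $M$. To justify this, we check on monomials that $c_{T[\ast\mapsto S]}=c_T|_{p_\ast=p_S}\cdot c_S$. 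This follows by induction on $T$ from the recursive formulas $c_{U\prec V}=p_Vc_Uc_V$ and $c_{U\succ V}=c_Uc_V/p_U$. The identity then extends to linear combinations.

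\emph{Count.} The resulting polynomial is multilinear on $B$. Each of its monomials has the $m-1$ occurrences of $\succ$ from $P'$ plus the one occurrence inside $\mu$, for $m$ in total.

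\emph{Degenerate case.} If fewer than three variables are absent from $M$, then $r-d\leq 2$. This forces $d=r-1$, which is the base case already handled.
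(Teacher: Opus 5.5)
Your proof is correct, but it takes a different route from the paper's.

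\textbf{The paper's route.} The paper uses no induction.
\begin{itemize}
\item It picks $C\subseteq B$ with $|C|=d+1$ containing $\operatorname{supp}M$.
\item It takes a pure-$\prec$ polynomial $P_0$ on $C$ with $c_{P_0}=M$, by Lemma~\ref{lem:pure-prec-polynomials}.
\item It pairs off the $2h$ leftover variables and wraps at the root, setting $P_s=\mu(P_{s-1},x_{a_s},x_{b_s})$.
\end{itemize}
Lemma~\ref{lem:hidden-pair}, applied with an arbitrary first argument $U=P_{s-1}$, then gives $c_{P_s}=c_{P_{s-1}}$ immediately.

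\textbf{Your route.} You insert $\mu(x_u,x_v,x_w)$ at a leaf $x_\ast$, so you only need Lemma~\ref{lem:hidden-pair} for single letters. The cost is two extra ingredients:
\begin{itemize}
\item the substitution rule $c_{T[\ast\mapsto S]}=c_T|_{p_\ast=p_S}\,c_S$, which you justify correctly by induction on $T$, using that $\ast$ occurs exactly once and that all monomials of $S$ have the same support;
\item the observation that $M$ is independent of $p_\ast$.
\end{itemize}
Your ``main obstacle'' arises only because you substitute at a leaf; wrapping at the root never meets it.

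\textbf{Comparison.} Your substitution rule is a more general principle: the coefficient map is compatible with operadic composition. It would let you insert $\mu$ anywhere in a tree. For this lemma, though, the paper's argument is shorter. It also directly produces the shape $\Pi(P_C(M),x_{a_1},\ldots,x_{a_{2h}})$ that the paper reuses to define $\beta_B(M)$ in Lemma~\ref{lem:block-normal-form}.

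Your final ``degenerate case'' paragraph is redundant. The inductive step already uses $m\geq1$, which gives $r-d\geq3$ absent variables.
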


\begin{proof}
Put $h=\frac{r-1-d}{2}$. By the assumptions on $d$, the integer $h$ is nonnegative. Since $M$ has degree $d$, it involves at most $d$ different variables.
Choose a subset
\begin{equation*}
C\subseteq B,
\qquad
|C|=d+1,
\end{equation*}
containing all variables occurring in $M$.

By Lemma \ref{lem:pure-prec-polynomials}, there exists a multilinear polynomial $P_0$ on the variables indexed by $C$, involving only $\prec$, such that $c_{P_0}=M$. It contains exactly $d$ occurrences of $\prec$. Since
\begin{equation*}
|B\setminus C|=r-d-1=2h,
\end{equation*}
write
\begin{equation*}
B\setminus C=\{a_1,b_1,\ldots,a_h,b_h\}.
\end{equation*}
Define successively
\begin{equation*}
P_s=\mu(P_{s-1},x_{a_s},x_{b_s}),\qquad1\leq s\leq h.
\end{equation*}
By Lemma \ref{lem:hidden-pair}, $c_{P_s}=c_{P_{s-1}}$. Hence
\begin{equation*}
c_{P_h}=M.
\end{equation*}
Each step adds one occurrence of $\prec$ and one occurrence of
$\succ$. Thus $P_h$ contains $h$ occurrences of $\succ$ and
$d+h=r-1-h$ occurrences of $\prec$.
\end{proof}

We can now realize every element of the proposed basis.

\begin{lemma}\label{lem:basis-in-R}
For every $n$ and $k$,
\begin{equation*}
\mathcal B_{n,k}\subseteq\mathcal R_{n,k}.
\end{equation*}
\end{lemma}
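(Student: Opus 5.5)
The plan is to build the monomial chain by chain. For each block $B_s$ we use Lemma \ref{lem:block-realization} to produce a polynomial whose coefficient is $M_s$. We then glue these block polynomials together with left-nested $\succ$-products, so that the denominators $p_{I_1},\ldots,p_{I_j}$ appear exactly once each. Finally we count occurrences of $\prec$ to check that the result lies in $\mathcal R_{n,k}$.

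First, fix a chain $\varnothing=I_0\subsetneq I_1\subsetneq\cdots\subsetneq I_j\subsetneq I_{j+1}=[n]$, degrees $d_s$ satisfying \eqref{eq:basis-degree-conditions} and \eqref{eq:basis-total-degree}, and monomials $M_s$ as in the definition. Each $M_s$ is a monomial of degree $d_s$ in variables indexed by (a subset of) $B_s$, and $d_s$ satisfies the hypotheses of Lemma \ref{lem:block-realization} with $r=|B_s|$. So that lemma gives a multilinear $\prec,\succ$-polynomial $Q_s$ on the variables indexed by $B_s$ with
\[
c_{Q_s}=M_s,
\]
and with exactly $h_s=\frac{|B_s|-1-d_s}{2}$ occurrences of $\succ$. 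The restriction that $M_s$ avoid $p_{\max B_s}$ for $s\le j$ plays no role here; it is only needed later for linear independence.

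Next, define
\[
W=\bigl(\cdots\bigl((Q_1\succ Q_2)\succ Q_3\bigr)\succ\cdots\bigr)\succ Q_{j+1}.
\]
We extend the rule $c_{U\succ V}=c_Uc_V/p_U$ by bilinearity to polynomials $U,V$ that are multilinear on fixed disjoint leaf sets. This is legitimate because every monomial of $U$ has the same leaf set, so every such monomial has the same $p_U$. At the $s$-th product, the left factor has leaf set $B_1\cup\cdots\cup B_s=I_s$. Induction on $s$ then gives
\[
c_W=\frac{M_1M_2\cdots M_{j+1}}{p_{I_1}p_{I_2}\cdots p_{I_j}},
\]
which is exactly the function \eqref{eq:basis-element-Rnk}.

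It remains to check the grading, which is the only point requiring care. The number of occurrences of $\succ$ in each monomial of $W$ is
\[
j+\sum_{s=1}^{j+1}h_s=j+\frac{n-(j+1)-\sum_s d_s}{2}.
\]
Using \eqref{eq:basis-total-degree}, the sum $\sum_s d_s$ equals $2k+j-n+1$, and the count becomes
\[
j+\frac{n-j-1-(2k+j-n+1)}{2}=j+(n-j-1-k)=n-1-k.
\]
Hence each monomial of $W$ contains exactly $k$ occurrences of $\prec$, so $c_W=\rho_n(W)\in\mathcal R_{n,k}$. This proves $\mathcal B_{n,k}\subseteq\mathcal R_{n,k}$.
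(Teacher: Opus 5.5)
Your proof is correct and follows the paper's argument essentially step for step. You realize each $M_s$ by Lemma~\ref{lem:block-realization}, glue the block polynomials by left-nested $\succ$-products so that the left factor at step $s$ has leaf set $I_s$, and count $j+\sum_s h_s=n-1-k$ occurrences of $\succ$. Your remark that $c_{U\succ V}=c_Uc_V/p_U$ extends bilinearly, because all monomials of $U$ share one leaf set, makes explicit a point the paper leaves implicit.
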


\begin{proof}
Take an element
\begin{equation*}
\frac{M_1\cdots M_{j+1}}
{p_{I_1}\cdots p_{I_j}}
\in\mathcal B_{n,k}.
\end{equation*}
Put $B_i=I_i\setminus I_{i-1}$ and $d_i=\deg M_i$. By the definition of $\mathcal B_{n,k}$,
\begin{equation*}
0\leq d_i\leq |B_i|-1,\;\; d_i\equiv |B_i|-1\;(\textrm{mod}\; 2),\;\;\textrm{and}\;\; \sum_{i=1}^{j+1}d_i=2k+j-n+1.
\end{equation*}
Put
\begin{equation*}
h_i=\frac{|B_i|-1-d_i}{2}.
\end{equation*}
Then
\begin{equation*}
\sum_{i=1}^{j+1}h_i=\frac{n-j-1-(2k+j-n+1)}{2}=n-j-1-k.
\end{equation*}

By Lemma \ref{lem:block-realization}, for every $i$ there exists a multilinear polynomial $P_i$ on the variables indexed by $B_i$ such that $c_{P_i}=M_i$. Moreover, $P_i$ contains $h_i$ occurrences of $\succ$ and $|B_i|-1-h_i$ occurrences of $\prec$.

Put $T_1=P_1$, $T_{s+1}=T_s\succ P_{s+1}$ and $1\leq s\leq j$. The variables occurring in $T_s$ are indexed by $I_s$. Hence
\begin{equation*}
c_{T_{s+1}}
=
\frac{c_{T_s}c_{P_{s+1}}}{p_{I_s}},
\end{equation*}
and therefore
\begin{equation}\label{eq:left-chain-final}
c_{T_{j+1}}
=
\frac{M_1\cdots M_{j+1}}
{p_{I_1}\cdots p_{I_j}}.
\end{equation}
The total number of occurrences of $\succ$ in $T_{j+1}$ is $\sum_{i=1}^{j+1}h_i+j=n-1-k$, while the number of occurrences of $\prec$ is $\sum_{i=1}^{j+1}(|B_i|-1-h_i)=k$. Thus \eqref{eq:left-chain-final} belongs to $\mathcal R_{n,k}$.
\end{proof}

We next prove the spanning property. We first pass from a
$\prec,\succ$-monomial to a signed rooted tree.

\begin{lemma}\label{lem:signed-tree-form}
Let $T$ be a multilinear $\prec,\succ$-monomial in the variables indexed by a
finite set $B$. Then $T$ determines a rooted tree $\Gamma_T$ with vertex set
$B$ whose edges are labelled by $+$ and $-$. For an edge $e$, let $S_e$ be the
vertex set of the rooted subtree below $e$. Then
\begin{equation*}
c_T=\frac{\displaystyle\prod_{e:\,+}p_{S_e}}{\displaystyle\prod_{e:\,-}p_{S_e}}.
\end{equation*}
The positive edges correspond to the occurrences of $\prec$, and the negative
edges correspond to the occurrences of $\succ$.
\end{lemma}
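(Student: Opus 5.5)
I would construct $\Gamma_T$ by induction on the number of leaves, carrying along a distinguished \emph{root} $\mathrm{rt}(T)\in B$, and prove the product formula together with the construction. If $T=x_a$ is a single variable, I take $\Gamma_T$ to be the one-vertex tree on $\{a\}$ with root $a$. Then $c_T=1$, which agrees with the empty products in the formula.

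For the inductive step, write $T=U\prec V$ or $T=U\succ V$, where $U$ and $V$ have disjoint vertex sets $B_U$ and $B_V$ with $B_U\cup B_V=B$. By induction, we already have the rooted trees $\Gamma_U$ and $\Gamma_V$. The two cases are handled as follows.
\begin{itemize}
\item If $T=U\prec V$, I attach the whole tree $\Gamma_V$ below $\mathrm{rt}(U)$. This adds one new edge from $\mathrm{rt}(U)$ to $\mathrm{rt}(V)$, labelled $+$, and I put $\mathrm{rt}(T)=\mathrm{rt}(U)$.
\item If $T=U\succ V$, I attach $\Gamma_U$ below $\mathrm{rt}(V)$. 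The new edge runs from $\mathrm{rt}(V)$ to $\mathrm{rt}(U)$, is labelled $-$, and I put $\mathrm{rt}(T)=\mathrm{rt}(V)$.
\end{itemize}
In both cases the result is a rooted tree on $B$. Each binary operation contributes exactly one edge, with $+$ for $\prec$ and $-$ for $\succ$, which gives the stated correspondence between edges and occurrences of the operations.

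The main point to check is that the subtree sets $S_e$ behave correctly, and I expect this to be the only slightly delicate step. First consider the new edge $e$. Its lower endpoint is the root of the attached tree, and nothing else is attached below any vertex of that tree. Hence $S_e=B_V$ in the $\prec$ case and $S_e=B_U$ in the $\succ$ case.

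Next consider an old edge. The gluing happens at the root of the receiving tree, and a root has no ancestors inside its own tree. So no old edge of the receiving tree lies above the attachment point, and its lower subtree does not change. The old edges of the attached tree also keep their subtrees. Therefore $S_e$ is unchanged for every old edge.

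By the inductive hypothesis, $c_U$ and $c_V$ are given by the product formula over the edges of $\Gamma_U$ and $\Gamma_V$. The recursion $c_{U\prec V}=p_Vc_Uc_V$ multiplies by $p_{S_e}$ for the new $+$ edge. The recursion $c_{U\succ V}=c_Uc_V/p_U$ divides by $p_{S_e}$ for the new $-$ edge. This gives exactly the product formula for $\Gamma_T$ and completes the induction.
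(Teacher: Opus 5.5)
Your proposal is correct and is essentially the paper's proof. Both build the tree by the same recursive gluing: $\Gamma_V$ is attached at the root of $\Gamma_U$ by a $+$ edge for $\prec$, and $\Gamma_U$ at the root of $\Gamma_V$ by a $-$ edge for $\succ$; both then note that gluing at a root leaves every old $S_e$ unchanged, so the recursions for $c_{U\prec V}$ and $c_{U\succ V}$ give the formula.
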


\begin{proof}
The construction is recursive. A single variable $x_a$ gives the one-vertex
tree with no edges; both products are empty and $c_{x_a}=1$.

Suppose that $T=U\prec V$. Join the root of $\Gamma_V$ to the root of
$\Gamma_U$ by a positive edge $e$ and keep the root of $\Gamma_U$ as the root
of $\Gamma_T$. Then $S_e$ is the set of variables occurring in $V$, so the new
edge contributes the factor $p_V$.

If $T=U\succ V$, join the root of $\Gamma_U$ to the root of $\Gamma_V$ by a
negative edge $e$ and keep the root of $\Gamma_V$ as the root of $\Gamma_T$.
Then $S_e$ is the set of variables occurring in $U$, so the new edge
contributes the factor $p_U^{-1}$.

In both cases the new edge is attached at the root of one of the two trees.
Since no root lies below an already existing edge, the sets $S_e$ attached to
the edges of $\Gamma_U$ and of $\Gamma_V$ are unchanged in $\Gamma_T$.
Consequently the induction hypothesis applies to $U$ and $V$, and the formulas
\begin{equation*}
c_{U\prec V}=p_Vc_Uc_V,\qquad c_{U\succ V}=\frac{c_Uc_V}{p_U}
\end{equation*}
give the claim.
\end{proof}

For a vertex $C$ of a rooted tree whose vertices are labelled by pairwise
disjoint nonempty blocks, put
\begin{equation*}
\Bbbk[C]:=\Bbbk[\,p_a,\ p_{S_g}\mid a\in C,\ g\text{ a child edge of }C\,],
\end{equation*}
where $S_e$ denotes the union of the blocks lying below an edge $e$. The
displayed generators are linear forms with nonempty pairwise disjoint supports,
hence algebraically independent; every element of $\Bbbk[C]$ is therefore
uniquely a polynomial in them.

The next lemma describes the cancellation of denominator factors.

\begin{lemma}\label{lem:negative-edge-contraction}
Let $Q$ be such a tree and suppose that every vertex $C$ carries a homogeneous
$P_C\in\Bbbk[C]$ of degree $d_C$ with
\begin{equation*}
0\leq d_C\leq |C|-1,\qquad d_C\equiv |C|-1\pmod 2 .
\end{equation*}
Then
\begin{equation*}
\frac{\prod_C P_C}{\prod_{e\in E(Q)}p_{S_e}}
\end{equation*}
is a linear combination of fractions
\begin{equation*}
\frac{P_1\cdots P_{j+1}}{\prod_{e\in E(Q')}p_{S_e}},
\end{equation*}
where $Q'$ is obtained from $Q$ by contracting $c$ edges, $B_1,\ldots,B_{j+1}$ are the blocks of $Q'$, each $P_i$ is homogeneous of degree $d_i$ in the variables indexed by $B_i$ with
\begin{equation*}
0\leq d_i\leq |B_i|-1,\qquad d_i\equiv |B_i|-1\pmod 2 ,
\end{equation*}
and $\sum_{i=1}^{j+1}d_i=\sum_C d_C-c$.
\end{lemma}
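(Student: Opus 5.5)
The plan is to argue by induction on the number of edges of $Q$, or equivalently by a terminating rewriting procedure. Each step cancels one denominator factor against a numerator factor and contracts the corresponding edge. Since the generators of $\Bbbk[C]$ are algebraically independent, we may expand each $P_C$ uniquely as a linear combination of monomials in $p_a$ ($a\in C$) and $p_{S_g}$ ($g$ a child edge of $C$). Multiplying out, the fraction becomes a linear combination of fractions of the same shape over the same tree $Q$, in which every $P_C$ is such a monomial of degree $d_C$. It therefore suffices to treat one such term.

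\textbf{Terminal case.} If for every vertex $C$ the monomial $P_C$ involves none of the generators $p_{S_g}$, then each $P_C$ is a homogeneous polynomial in the variables $p_a$, $a\in C$, of degree $d_C$. The term already has the required form with $Q'=Q$ and $c=0$.

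\textbf{Contraction step.} Otherwise some vertex $C$ has a child edge $g$, leading to a child vertex $C'$, such that $p_{S_g}$ divides $P_C$; in particular $d_C\geq 1$. Cancel $p_{S_g}$ against the same factor in the denominator and contract $g$, merging $C$ and $C'$ into a single block $C\cup C'$. Put
\begin{equation*}
P_{C\cup C'}=\frac{P_C}{p_{S_g}}\,P_{C'} .
\end{equation*}
Three things must be checked.

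First, contraction does not change $S_e$ for any remaining edge $e$. Edges above $g$ still see the same union of blocks below them. The child edges of $C'$ become child edges of the merged vertex, and the sets below them are unchanged.

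Second, $P_{C\cup C'}\in\Bbbk[C\cup C']$. The factor $P_{C'}$ is built from $p_a$ with $a\in C'$ and from $p_{S_h}$ with $h$ a child edge of $C'$; all of these are generators of $\Bbbk[C\cup C']$. The factor $P_C/p_{S_g}$ is a monomial in $p_a$ ($a\in C$), in $p_{S_h}$ for the other child edges $h$ of $C$, and possibly in further powers of $p_{S_g}$. Each such power can be rewritten using
\begin{equation*}
p_{S_g}=\sum_{a\in C'}p_a+\sum_{h}p_{S_h},
\end{equation*}
where $h$ runs over the child edges of $C'$, which again lies in $\Bbbk[C\cup C']$.

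Third, the degree $d=d_C+d_{C'}-1$ satisfies the required constraints. It is nonnegative because $d_C\geq 1$. For the upper bound, $d\leq(|C|-1)+(|C'|-1)-1\leq |C\cup C'|-1$. For the parity,
\begin{equation*}
d\equiv(|C|-1)+(|C'|-1)-1\equiv |C\cup C'|-1\pmod 2 .
\end{equation*}

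After this step we have a tree with one fewer edge, the same hypotheses hold, and the total degree has dropped by exactly $1$. By induction on the number of edges, each resulting term reduces to the terminal case after $c$ contractions. Its total degree is then $\sum_C d_C-c$, where $c$ counts the contractions performed along that branch; the value of $c$ may differ from term to term, which the statement allows. The blocks of the final tree $Q'$ are the $B_i$, and the corresponding $P_i$ are homogeneous polynomials in the variables indexed by $B_i$.

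The only delicate point, and the one to check most carefully, is the second verification: that the merged polynomial genuinely lies in $\Bbbk[C\cup C']$, including the case where powers of $p_{S_g}$ survive the single cancellation. It is handled by expressing $p_{S_g}$ through the generators of the merged vertex as above. The rest is bookkeeping of degrees and parities.
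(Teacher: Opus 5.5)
Your proof is correct and follows the paper's argument. Both use induction on the number of edges: cancel $p_{S_g}$ against a numerator factor, contract $g$, and rewrite leftover powers of $p_{S_g}$ as $\sum_{a\in C'}p_a+\sum_h p_{S_h}$ in the generators of the merged vertex, with the same degree and parity check. The only difference is bookkeeping. You expand each $P_C$ into monomials in the generators, so the non-cancelling case is immediately terminal. The paper instead splits $P_C=P_C^{(0)}+XP_C^{(1)}$ at a root edge and, in the $P_C^{(0)}$ branch, applies induction separately to the two subtrees obtained by deleting that edge and multiplies the results; your version avoids that cutting and regluing.
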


\begin{proof}
Induction on the number of edges. If $Q$ has no edges, its only vertex has no child edges, so $P_C\in\Bbbk[\,p_a\mid a\in C\,]$ and the claim holds with $Q'=Q$, $j=0$, $c=0$.

Let $C$ be the root, let $e$ join a child block $C'$ to $C$, and put $X=p_{S_e}$. Deleting $e$ splits $Q$ into the subtree $Q_1$ with root $C'$ and the remaining tree $Q_2$ with root $C$. Since $e$ is incident to the root, no other edge has $C'$ below it; hence $S_{e'}$ is unchanged for every $e'\neq e$, and $S_e$ is the union of the blocks of $Q_1$. Contracting an edge merges two adjacent blocks and likewise leaves all other $S_{e'}$ unchanged. We use this below without further mention.

Write uniquely $P_C=P_C^{(0)}+XP_C^{(1)}$ with $P_C^{(0)}=P_C|_{X=0}$; by homogeneity $P_C^{(0)}$ and $P_C^{(1)}$ are, when nonzero, homogeneous of degrees $d_C$ and $d_C-1$.

For the term $P_C^{(0)}$ the denominator $X$ remains. As $P_C^{(0)}$ does not involve $X$, it lies in the ring allowed for the root of $Q_2$ and still has degree $d_C$, while the polynomials on $Q_1$ are unchanged. So the induction hypothesis applies to $Q_1$ and $Q_2$; multiplying the two linear combinations and restoring $X^{-1}$ gives terms in which $e$ is not contracted, with degree sum $\sum_C d_C-(c_1+c_2)$, where $c_i$ is the number of edges contracted in $Q_i$.

For the term $XP_C^{(1)}$ the factor $X$ cancels the denominator of $e$; we contract $e$ and put $C''=C\sqcup C'$. Since $P_C^{(1)}$ may still involve $X$, which is no longer the subtree sum of a child edge of $C''$, we substitute
\begin{equation*}
X=p_{C'}+\sum_g p_{S_g},
\end{equation*}
$g$ running through the child edges of $C'$; the right-hand side is a linear form in the generators of $\Bbbk[C'']$. Hence $P_{C''}=P_C^{(1)}P_{C'}$ lies in $\Bbbk[C'']$ and is homogeneous of degree $d_{C''}=d_C+d_{C'}-1$. If it is nonzero, then $d_C\geq1$, so $d_{C''}\geq0$; moreover
\begin{equation*}
d_{C''}\leq(|C|-1)+(|C'|-1)-1\leq|C''|-1,\qquad d_{C''}\equiv|C''|-1\pmod 2 .
\end{equation*}
Thus the contracted tree satisfies the hypotheses and has one edge fewer, while the degree sum drops by one and the number of contracted edges grows by one. The induction hypothesis now gives the assertion in this case as well.
\end{proof}

We shall also use the following elementary straightening of a rooted tree denominator.

\begin{lemma}\label{lem:tree-to-chain}
Let $Q$ be a rooted tree on pairwise disjoint nonempty blocks. Then
\begin{equation*}
\frac1{\displaystyle\prod_{e\in E(Q)}p_{S_e}}
\end{equation*}
is a linear combination of fractions
\begin{equation*}
\frac1{p_{I_1}\cdots p_{I_j}},
\end{equation*}
where $\varnothing\subsetneq I_1\subsetneq\cdots\subsetneq I_j\subsetneq[n]$ is obtained by ordering the vertex blocks according to a linear extension of the rooted-tree order, with descendants preceding their ancestors.
\end{lemma}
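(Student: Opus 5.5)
The plan is to prove a sharper, explicit identity by induction, namely
\begin{equation*}
\frac1{\prod_{e\in E(Q)}p_{S_e}}=\sum_{\sigma}\frac1{p_{I_1^\sigma}\cdots p_{I_j^\sigma}} .
\end{equation*}
Here $\sigma$ runs over the linear orderings $C_1,\ldots,C_j$ of the \emph{non-root} blocks of $Q$ in which every descendant precedes its ancestors, and $I^\sigma_s=C_1\sqcup\cdots\sqcup C_s$. Appending the root block as $C_{j+1}$ gives a linear extension of the whole tree order, as the lemma requires, and $I_{j+1}=[n]$.

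The edges of $Q$ are in bijection with its non-root vertices $v$: the edge from $v$ to its parent has $S_e=T_v$, the union of the blocks in the subtree rooted at $v$. So the left-hand side equals $\prod_{v}p_{T_v}^{-1}$ over the forest $F=Q\setminus\{\text{root}\}$. The chains are strict because the blocks are nonempty. We have $I_j\subsetneq[n]$ because the root block is nonempty. Each $I^\sigma_s$ is a down-set of $F$, since descendants come first.

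I will prove the forest version of the identity by induction on the number of vertices of a forest $F$ of nonempty disjoint blocks: $\prod_{v\in F}p_{T_v}^{-1}$ equals the sum over linear extensions $\sigma$ of $F$ (descendants first) of $\prod_s p_{I^\sigma_s}^{-1}$. The empty forest gives $1=1$.

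For the inductive step, let $W$ be the union of all blocks of $F$ and let $r_1,\ldots,r_m$ be the roots of $F$. In every linear extension the last element is some root $r_i$, and the last prefix is always $I_j=W$. Hence the right-hand side equals
\begin{equation*}
\frac1{p_W}\sum_{i=1}^m\ \sum_{\sigma'\text{ ext.\ of }F\setminus r_i}\ \prod_s\frac1{p_{I^{\sigma'}_s}} .
\end{equation*}
Deleting a root $r_i$ leaves the subtree sums $T_v$ of all other vertices unchanged. So the induction hypothesis turns the inner sum into $p_{T_{r_i}}\prod_{v\in F}p_{T_v}^{-1}$. Since $\sum_i p_{T_{r_i}}=p_W$, the factor $1/p_W$ cancels, which gives the claim.

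The only delicate point is bookkeeping rather than an analytic step. One must check that removing a root of the forest does not alter the remaining $S_e$. One must also check that the root of $Q$ contributes no denominator, so that the chain stops at $I_j\subsetneq[n]$. Both follow directly from the description of $S_e$ as subtree unions, as already observed in the proof of Lemma~\ref{lem:negative-edge-contraction}. This identity is the classical partial-fraction "hook" formula for forests. It gives the lemma with all coefficients equal to $1$.
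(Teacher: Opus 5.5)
Your proof is correct, and it is organized differently from the paper's. The paper inducts on $Q$ by splitting at its root. A rooted path already gives a chain. When the root has several branches, it applies the induction hypothesis to each branch and then merges the resulting branch chains by repeated use of $\frac{1}{p_Ap_B}=\frac{1}{p_{A\cup B}}\bigl(\frac{1}{p_A}+\frac{1}{p_B}\bigr)$; it asserts that this yields all shuffles, i.e.\ all linear extensions. You instead pass to the forest $F$ of non-root blocks, noting correctly that the root of $Q$ comes last in every linear extension. You then induct on the number of vertices by removing the last element of a linear extension, which must be a root $r_i$ of $F$, and the single identity $\sum_i p_{T_{r_i}}=p_W$ does all the work. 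Both arguments rest on the same fact, additivity of $p$ over disjoint unions. The shuffle-merging step the paper leaves implicit is itself proved by your kind of ``remove the last step'' induction, with $p_{A_a\cup B_b}=p_{A_a}+p_{B_b}$ as the two-root case of your identity. Your version is uniform and self-contained: it needs no separate path case and no auxiliary two-chain shuffle lemma. It also gives an explicit expansion over all linear extensions with every coefficient equal to $1$, which makes precise the paper's claim that the shuffles are ``precisely the linear extensions.'' The paper's version is more modular (branch decomposition plus a binary merging rule), but as written its merging step is a sketch that needs exactly the bookkeeping you carry out.
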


\begin{proof}
We use
\begin{equation}\label{eq:partial-fraction-basic}
\frac1{p_Ap_B}=\frac1{p_{A\cup B}}
\left(
\frac1{p_A}+\frac1{p_B}
\right), \qquad A\cap B=\varnothing.
\end{equation}
For a rooted path the statement is immediate. If the root has several branches, apply the induction hypothesis to every branch and then use \eqref{eq:partial-fraction-basic} repeatedly. This gives all shuffles of the chains corresponding to the branches, which are precisely the linear extensions of the rooted-tree order.
\end{proof}

It remains to reduce the polynomial numerator to the prescribed monomials.

\begin{lemma}\label{lem:chain-reduction}
Suppose that
\begin{equation*}
\varnothing=I_0\subsetneq I_1\subsetneq\cdots\subsetneq I_j\subsetneq I_{j+1}=[n],
\end{equation*}
and put $B_i=I_i\setminus I_{i-1}$. Let $P_i$ be homogeneous polynomials in the variables indexed by $B_i$ of degrees $d_i$ satisfying
\begin{equation*}
0\leq d_i\leq |B_i|-1,\qquad d_i\equiv |B_i|-1\pmod 2,
\end{equation*}
and $d_1+\cdots+d_{j+1}=2k+j-n+1$. Then
\begin{equation*}
\frac{P_1\cdots P_{j+1}}{p_{I_1}\cdots p_{I_j}}
\end{equation*}
is a linear combination of elements of $\mathcal B_{n,k}$.
\end{lemma}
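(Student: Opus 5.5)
We argue by induction on the number of nonfinal blocks whose numerator still involves the eliminated variable. By multilinearity of the expression in $P_1,\ldots,P_{j+1}$, we may assume that every $P_i$ is a monomial of degree $d_i$ in the variables $p_a$, $a\in B_i$. The final factor $P_{j+1}$ already has the required form: a monomial of degree $d_{j+1}$ in the variables indexed by $B_{j+1}$, with no variable excluded. The only issue is therefore the nonfinal blocks $B_s$, $s\le j$, where $M_s$ may not contain $p_{m_s}$, $m_s=\max B_s$.

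For a nonfinal block we use the identity
\[
p_{m_s}=p_{B_s}-\sum_{a\in B_s\setminus\{m_s\}}p_a,\qquad p_{B_s}=p_{I_s}-p_{I_{s-1}},
\]
with the convention $p_{I_0}=0$. Substituting this into one occurrence of $p_{m_s}$ in $P_s$ gives two kinds of terms.

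The first kind is a term in which the power of $p_{m_s}$ has dropped. There the degree in the excluded variable decreases, while all degree and parity conditions are unchanged.

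The second kind is a term with an extra factor $p_{I_s}-p_{I_{s-1}}$. The factor $p_{I_s}$ cancels the denominator factor $p_{I_s}$, and $p_{I_{s-1}}$ cancels the denominator $p_{I_{s-1}}$ when $s\ge2$. Each such cancellation removes one element of the chain, so $j$ drops by one and two adjacent blocks merge. The leftover numerator $P_s/p_{m_s}$ times $P_{s\pm1}$ then has degree $d_s-1+d_{s\pm1}$ on the merged block $B_s\cup B_{s\pm1}$. This is at most $|B_s\cup B_{s\pm1}|-1$ and has the right parity, exactly as in the contraction step of Lemma~\ref{lem:negative-edge-contraction}.

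The global degree condition is also preserved. The new value of $j$ is $j-1$ and the degree sum drops by one, so $\sum d_i=2k+(j-1)-n+1$ still holds. Hence every term is again of the form in the statement, and the hypotheses of the lemma are stable under the reduction.

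Termination uses a lexicographic measure: first the chain length $j$, then the total degree in the excluded variables $p_{m_1},\ldots,p_{m_j}$. Merging strictly decreases $j$, and the remaining terms decrease the second component with $j$ fixed. There is one subtlety when a merged block was $B_{j+1}$, the final block: the result is simply a final block with no excluded variable, which is allowed. When two nonfinal blocks merge, the new maximum may be a different index, so excluded-variable degrees are recomputed. This does not matter, because $j$ has already dropped.

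When no term can be reduced further, every $P_s$ with $s\le j$ is a monomial in $\{p_a: a\in B_s\setminus\{\max B_s\}\}$ of degree $d_s$ satisfying the conditions \eqref{eq:basis-degree-conditions} and \eqref{eq:basis-total-degree}. Such a function is by definition an element of $\mathcal B_{n,k}$.

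The main obstacle is the bookkeeping in the merging step. One must check that the degree bound $d\le|B|-1$ and the parity condition survive: a merge yields degree $d_s+d_{s\pm1}-1$ on $|B_s|+|B_{s\pm1}|$ elements, and the parity changes by exactly one in the correct direction. One must also check that a zero degree $d_s=0$ never produces a negative degree. This cannot happen, since $d_s=0$ means $P_s=1$ involves no $p_{m_s}$ and so is never rewritten.
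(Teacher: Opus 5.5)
Your proposal is correct and follows essentially the paper's argument. Both eliminate $p_{\max B_s}$ via $p_{B_s}=p_{I_s}-p_{I_{s-1}}$, cancel a chain factor so that adjacent blocks merge with degree $d_s+d_{s\pm1}-1$, and check that the degree, parity and total-degree conditions survive. The only difference is bookkeeping: you peel off one occurrence of $p_{\max B_s}$ at a time and terminate via the lexicographic measure $(j,\ \text{excluded degree})$, whereas the paper splits $P_i=\overline P_i+p_{B_i}Q_i$ in one step and inducts on $j$. Note that this lexicographic measure, not the parameter named in your first sentence, is the one that actually decreases.
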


\begin{proof}
We use induction on $j$. For $j=0$, the polynomial $P_1$ has degree $2k-n+1$ and satisfies the required degree conditions. Expanding it into monomials gives elements of $\mathcal B_{n,k}$.

Suppose that $j>0$. For $1\leq i\leq j$, put $b_i=\max B_i$. Let $\overline P_i$ be obtained from $P_i$ by the substitution
\begin{equation*}
p_{b_i}=-\sum_{a\in B_i\setminus\{b_i\}}p_a.
\end{equation*}
Then
\begin{equation*}
P_i=\overline P_i+p_{B_i}Q_i,
\end{equation*}
where $\overline P_i$ is homogeneous of degree $d_i$ in the variables
\begin{equation*}
\{p_a\mid a\in B_i\setminus\{\max B_i\}\},
\end{equation*}
and $Q_i$ is homogeneous of degree $d_i-1$ whenever it is nonzero. Since
\begin{equation*}
p_{B_i}=p_{I_i}-p_{I_{i-1}},\qquad p_{I_0}=0,
\end{equation*}
every term containing $p_{B_i}Q_i$ is a sum of fractions with one fewer denominator factor.

Suppose first that $p_{I_i}$ is cancelled. Then the adjacent blocks $B_i$ and $B_{i+1}$ merge, and the degree of the polynomial in the new block is $d_i+d_{i+1}-1$. If the corresponding term is nonzero, this degree is nonnegative. It also satisfies
\begin{equation*}
d_i+d_{i+1}-1\leq |B_i\cup B_{i+1}|-1\;\;\;\textrm{and}\;\;\;
d_i+d_{i+1}-1\equiv |B_i\cup B_{i+1}|-1\; (\textrm{mod} 2).
\end{equation*}
Moreover, the sum of all polynomial degrees decreases by one. Since the number of denominator factors also decreases from $j$ to $j-1$, we have
\begin{equation*}
\left(\sum_{s=1}^{j+1}d_s\right)-1=2k+(j-1)-n+1.
\end{equation*}
Thus the same conditions are preserved.

If $i>1$ and $p_{I_{i-1}}$ is cancelled, the blocks $B_{i-1}$ and $B_i$ merge, and the same argument applies.

Therefore the induction hypothesis applies to every term in which a denominator factor is cancelled. Repeating the decomposition for all nonfinal blocks, the only remaining term has each $P_i$, $1\leq i\leq j$, replaced by $\overline P_i$. Expanding these polynomials into monomials, and expanding $P_{j+1}$ into monomials in all variables of the final block, gives exactly the elements of $\mathcal B_{n,k}$.
\end{proof}

We can now prove the spanning property.

\begin{lemma}\label{lem:basis-spans}
For every $n$ and $k$,
\begin{equation*}
\mathcal R_{n,k}=\operatorname{span}\mathcal B_{n,k}.
\end{equation*}
\end{lemma}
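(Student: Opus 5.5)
One inclusion is already known: Lemma \ref{lem:basis-in-R} gives $\operatorname{span}\mathcal B_{n,k}\subseteq\mathcal R_{n,k}$. For the converse it suffices to show that $c_T\in\operatorname{span}\mathcal B_{n,k}$ for every multilinear monomial $T$ with exactly $k$ occurrences of $\prec$. The plan is to pass $c_T$ successively through Lemmas \ref{lem:signed-tree-form}, \ref{lem:negative-edge-contraction}, \ref{lem:tree-to-chain} and \ref{lem:chain-reduction}.

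\emph{Step 1: the block tree.} By Lemma \ref{lem:signed-tree-form}, $T$ gives a signed rooted tree $\Gamma_T$ on $[n]$ with $k$ positive and $n-1-k$ negative edges. Contract all positive edges. This produces a rooted tree $Q$ whose vertices are blocks $C$ (the connected components of the positive forest), and whose edges are exactly the negative edges. Contracting a positive edge merges a vertex with one of its children. For every remaining edge $e$, the set $S_e$ of original vertices below $e$ equals the union of the blocks below $e$ in $Q$. The same holds for a positive edge $e$ lying inside a block $C$: $S_e$ is the union of some vertices of $C$ and of the sets $S_g$ for the child edges $g$ of $C$ that hang below it. Hence $p_{S_e}$ is a sum of generators of $\Bbbk[C]$.

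So the product of the positive factors inside $C$ is a homogeneous $P_C\in\Bbbk[C]$ of degree $d_C=|C|-1$, and
\[
c_T=\frac{\prod_C P_C}{\prod_{e\in E(Q)}p_{S_e}}.
\]
The conditions $0\le d_C\le |C|-1$ and $d_C\equiv|C|-1\pmod 2$ hold trivially. Here $\sum_C d_C=n-|V(Q)|=n-1-|E(Q)|+\ldots$; more precisely $\sum_C d_C=k$ and $|E(Q)|=n-1-k$.

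\emph{Step 2: cancellation and straightening.} Lemma \ref{lem:negative-edge-contraction} rewrites $c_T$ as a combination of fractions $P_1\cdots P_{j+1}/\prod_{e\in E(Q')}p_{S_e}$. Here $Q'$ arises from $Q$ by contracting $c$ edges, so $j=n-1-k-c$ and $\sum d_i=k-c$. Hence
\[
\sum d_i=k-(n-1-k-j)=2k+j-n+1,
\]
which is exactly \eqref{eq:basis-total-degree}. Each $P_i$ is a polynomial in the variables of $B_i$ satisfying the parity and size bounds.

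Next apply Lemma \ref{lem:tree-to-chain} to $1/\prod_{e\in E(Q')}p_{S_e}$. This yields sums of $1/(p_{I_1}\cdots p_{I_j})$ over linear extensions in which descendants precede ancestors. Each $I_s$ is then a union of blocks, and consecutive differences $I_s\setminus I_{s-1}$ are single blocks of $Q'$. The root block is last, giving $I_{j+1}=[n]$. The numerator $P_1\cdots P_{j+1}$ is unchanged, with each $P_i$ attached to the block $I_s\setminus I_{s-1}$ it labels. Lemma \ref{lem:chain-reduction} then finishes the argument.

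\emph{Main obstacle.} The delicate point is the bookkeeping in Step 1 together with the compatibility between Lemma \ref{lem:tree-to-chain} and Lemma \ref{lem:chain-reduction}. Two things must be checked:
\begin{itemize}
\item[(a)] After the chain is chosen, the block attached to the polynomial $P_i$ is exactly $I_s\setminus I_{s-1}$ for the corresponding position $s$. This requires the number of distinct chain sets to equal the number of edges, that is, one $I_s$ per non-root block.
\item[(b)] The $j$ and the degree data match the normalization of $\mathcal B_{n,k}$.
\end{itemize}
I expect (a) to follow directly from the form of the partial fraction identity \eqref{eq:partial-fraction-basic}, which preserves the number of denominator factors. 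I also expect that the ring $\Bbbk[C]$ is no longer needed once the tree has collapsed, since $P_i$ is then just a polynomial in $p_a$ with $a\in B_i$.
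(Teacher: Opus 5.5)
Your proposal is correct and follows the paper's proof essentially step for step. You contract the positive edges of $\Gamma_T$ to get the block tree $Q$ with $P_C\in\Bbbk[C]$ of degree $|C|-1$, and use Lemma~\ref{lem:negative-edge-contraction} to get $\sum d_i=2k+j-n+1$. You then straighten the denominator with Lemma~\ref{lem:tree-to-chain}, so that the sets $I_s\setminus I_{s-1}$ are the blocks of $Q'$ in a linear extension, and finish with Lemma~\ref{lem:chain-reduction}. The only blemish is the stray ``$+\ldots$'' in your degree count, which should read $\sum_C d_C=n-|V(Q)|=n-1-|E(Q)|=k$; the points you list as the ``main obstacle'' are already settled by your Step~2 and by the form of Lemma~\ref{lem:tree-to-chain}.
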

\begin{proof}
The inclusion $\supseteq$ is Lemma~\ref{lem:basis-in-R}. Since $\mathcal R_{n,k}$ is spanned by the $c_T$ with $T$ a multilinear monomial containing $k$ occurrences of $\prec$, it suffices to show $c_T\in\operatorname{span}\mathcal B_{n,k}$ for every such $T$.

Let $\Gamma_T$ be the signed rooted tree of Lemma~\ref{lem:signed-tree-form}; it has $k$ positive and $n-1-k$ negative edges. Delete the negative edges and let $Q$ be the graph whose vertices are the resulting components and whose edges are the negative edges. Then $Q$ is obtained from the tree $\Gamma_T$ by contracting all positive edges, hence is a tree, rooted at the block containing the root of $\Gamma_T$, with $n-1-k$ edges; for a negative edge $e$ the blocks below $e$ in $Q$ have union $S_e$, so the notation is unambiguous. For a block $C$ put
\begin{equation*}
P_C=\prod_{\substack{a\ \text{positive edge}\\ a\subseteq C}}p_{S_a},\qquad\text{so that}\qquad c_T=\frac{\prod_C P_C}{\prod_{e\in E(Q)}p_{S_e}} .
\end{equation*}
The positive edges inside $C$ form a spanning tree of $C$, so $P_C$ is homogeneous of degree $d_C=|C|-1$; in particular the degree and parity conditions hold, and $\sum_C d_C=k$, the total number of positive edges. Note that $P_C$ is not a polynomial in the variables indexed by $C$ alone: for a positive edge $a\subseteq C$,
\begin{equation*}
p_{S_a}=\sum_{b\in S_a\cap C}p_b+\sum_g p_{S_g},
\end{equation*}
where $g$ runs through the child edges of $C$ lying below $a$, since $S_a\setminus C$ is the union of the subtrees hanging on these edges. Hence $P_C\in\Bbbk[C]$, as required in Lemma~\ref{lem:negative-edge-contraction}.

By that lemma, $c_T$ is a linear combination of fractions
\begin{equation*}
\frac{P_1\cdots P_{j+1}}{\prod_{e\in E(Q')}p_{S_e}},
\end{equation*}
where $Q'$ is obtained from $Q$ by contracting $c$ edges, its blocks $B_1,\ldots,B_{j+1}$ partition $[n]$, and each $P_i$ is homogeneous of degree $d_i$ in the variables indexed by $B_i$ with $0\le d_i\le|B_i|-1$ and $d_i\equiv|B_i|-1\pmod 2$. We treat each term separately, so $Q'$, $j$ and $c$ may vary from term to term. As $Q$ has $n-1-k$ edges and $Q'$ has $j$, we get $c=n-1-k-j$ and hence
\begin{equation*}
\sum_{i=1}^{j+1}d_i=k-(n-1-k-j)=2k+j-n+1 .
\end{equation*}
By Lemma~\ref{lem:tree-to-chain}, the denominator of such a term is a linear combination of chain denominators $p_{I_1}\cdots p_{I_j}$, where $I_s$ is the union of the first $s$ blocks of $Q'$ for a linear extension of the rooted-tree order. Setting $I_0=\emptyset$ and $I_{j+1}=[n]$, the sets $I_s\setminus I_{s-1}$ are exactly the blocks $B_1,\ldots,B_{j+1}$ in that order, with the polynomials $P_i$ attached to them; the displayed degree conditions are independent of the ordering. Lemma~\ref{lem:chain-reduction} now applies and expresses every such fraction as a linear combination of elements of $\mathcal B_{n,k}$.
\end{proof}

It remains to prove linear independence.

\begin{lemma}\label{lem:basis-independent}
The elements of $\mathcal B_{n,k}$ are pairwise distinct and linearly independent.
\end{lemma}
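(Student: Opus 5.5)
We argue by induction on $n$, using residues along the hyperplanes $p_I=0$ exactly as in the degree-three computation of Section~3.

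\emph{Distinctness.} Each element of $\mathcal B_{n,k}$ is written in lowest terms. The numerator $M_1\cdots M_{j+1}$ is a product of variables, and each $p_{I_s}$ with $|I_s|\ge 2$ is an irreducible linear form that is not a variable. If $|I_s|=1$, say $I_s=\{a\}$, then $s=1$ and $B_1=\{a\}$; so $d_1=0$, and the variable $p_a$ cannot occur in any later $M_t$ because the blocks are disjoint. Hence the denominator, and therefore the chain, is recovered from the reduced fraction. The blocks $B_s$ are then determined, and the numerator factors uniquely as $M_1\cdots M_{j+1}$ according to which block each variable lies in. So the data can be read off the function, and the elements are pairwise distinct.

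\emph{Independence.} Fix a nontrivial relation $\sum_\beta \lambda_\beta f_\beta=0$ among elements of $\mathcal B_{n,k}$. Among the occurring chains, choose a maximal set $I_1=I$ that occurs as the first member of a chain. Multiply the relation by $p_I$ and restrict to the hyperplane $p_I=0$, parametrized by the variables $p_a$ with $a\neq\max I$, setting $p_{\max I}=-\sum_{a\in I\setminus\{\max I\}}p_a$. After multiplying by $p_I$:
\begin{itemize}
\item Terms whose chain does not contain $I$ vanish on the hyperplane, provided their other denominator factors $p_{I_s}$ do not vanish identically there. For $I_s\neq I$ the form $p_{I_s}$ is not proportional to $p_I$, so this holds. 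Their numerators stay polynomial, and the extra factor $p_I$ kills them.
\item Terms whose chain contains $I$ at a later position $I_s$ with $s>1$ would make $I_1\subsetneq I$ appear in some other chain. This does not contradict the choice, since maximality was taken among first members. So the order of choices has to be fixed more carefully.
\end{itemize}

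The cleaner route is to take residues from the \emph{top} of the chain. Choose $I=I_j$, the largest proper subset occurring, of maximal size among all chains in the relation. On $p_I=0$ the variables split into those of $I$, subject to one relation, and those of $[n]\setminus I$, which are free. The factor $p_I\cdot f_\beta$ restricted to $p_I=0$ equals
\begin{equation*}
\frac{M_1\cdots M_j}{p_{I_1}\cdots p_{I_{j-1}}}\Big|_{p_{\max I}=-\cdots}\cdot M_{j+1}.
\end{equation*}
Here $M_{j+1}$ is a free monomial in the variables of $[n]\setminus I$. The first factor is an element of $\mathcal B$ for the set $I$ with final block $B_j$, except that $M_j$ avoids $p_{\max B_j}$ and the substitution eliminates $p_{\max I}=p_{\max B_j}$ only when $\max I\in B_j$. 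One has to check that $\max I\in B_j$ can be arranged, or that the substitution is harmless. I would instead eliminate $p_{\max B_j}$, which is legitimate because $p_I$ contains every variable of $B_j$. The restricted function is then the image of $M_1\cdots M_j/(p_{I_1}\cdots p_{I_{j-1}})$ under an injective substitution into the quotient. Terms whose chain does not contain $I$ are killed, since $p_I$ does not divide their denominators and no denominator $p_J$ with $J\neq I$ vanishes identically on $p_I=0$.

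Grouping the surviving terms by the monomial $M_{j+1}$ in the free variables reduces the problem to independence of the smaller families for $I$. Those families are independent by induction, after identifying functions on $p_I=0$ with rational functions in the variables of $I\setminus\{\max B_j\}$. This kills all coefficients with $I_j=I$. Iterating over the finitely many choices of $I$ eliminates all terms with $j\ge1$. What remains are the $j=0$ terms, which are distinct monomials and hence independent.

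\emph{Main obstacle.} The delicate step is the induction for the restriction. The inner functions are elements of $\mathcal B$ for $I$ in which the last block of $I$ is treated as final, yet its monomial omits $p_{\max B_j}$, and $p_{\max B_j}$ has been eliminated. I therefore expect to prove a slightly stronger statement by induction: for a chain ending in a block $B$ in which one designated variable is excluded from the numerator monomials, the functions remain linearly independent after restricting to $p_{[\,\cdot\,]}=0$. This reduces to the observation that the restriction is an isomorphism of the rational function field in the remaining variables.
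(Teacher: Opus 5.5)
\textbf{Verdict: genuine gap.} Your distinctness argument is correct. The reduced fraction is in lowest terms, and its denominator is a product of pairwise non-proportional irreducible linear forms, so it determines the chain; the blocks then split the numerator monomial. This is more direct than the paper, which gets distinctness only as a by-product of independence.

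The independence argument fails at exactly the step you flagged. The residue along the top hyperplane $p_I=0$ does not separate the terms whose chain ends in $I$, and the stronger inductive statement you propose is false. Take $n=3$, $k=0$, where $\mathcal B_{3,0}$ consists of the six functions $\frac{1}{p_a(p_a+p_b)}$, and let $I=\{1,2\}$.
\begin{itemize}
\item The elements $\frac{1}{p_1(p_1+p_2)}$ and $\frac{1}{p_2(p_1+p_2)}$ both have top set $I$ and the same final monomial $M_3=1$.
\item After multiplying by $p_1+p_2$ and restricting to $p_2=-p_1$, they become $\frac{1}{p_1}$ and $-\frac{1}{p_1}$.
\item So your step yields only $\lambda_{12}=\lambda_{21}$, not $\lambda_{12}=\lambda_{21}=0$.
\end{itemize}
Equivalently, $\frac{1}{p_1(p_1+p_2)}+\frac{1}{p_2(p_1+p_2)}=\frac{1}{p_1p_2}$ is regular along $p_1+p_2=0$ (the partial-fraction identity of Lemma~\ref{lem:tree-to-chain}), so no top residue can detect this combination. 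In general $p_J=-p_{I\setminus J}$ on $\{p_I=0\}$, so inner chains related by complementation collide. Your identification of the hyperplane with rational functions in the variables of $I\setminus\{\max B_j\}$ depends on the chain through $\max B_j$. Since the collision is an identity of functions on the hyperplane, no choice of coordinates can repair it.

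The missing idea is to take residues from the \emph{bottom} of one fixed chain $I_1\subsetneq\cdots\subsetneq I_j$ of maximal length, iterated along a flag, as the paper does.
\begin{itemize}
\item For $s\le j$, replace $p_{\max B_s}$ by $q_s=p_{B_s}$.
\item Multiply by $q_1$ and restrict to $q_1=0$, then multiply by $q_2$ and restrict to $q_2=0$, and so on.
\item On $\{q_1=\cdots=q_s=0\}$ a form $p_J$ vanishes only when $J$ is a union of some of $B_1,\dots,B_s$. Comparability with $I_1,\dots,I_{s-1}$ then forces $J=I_s$, so each step is defined on all surviving terms and kills exactly those whose chain avoids $I_s$.
\end{itemize}
The worry in your first bullet then disappears. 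Terms containing $I_1$ at a later position simply survive step one and are killed later, because after $j$ steps the survivors must contain all of $I_1,\dots,I_j$. By maximality of $j$, they therefore have exactly the fixed chain. Their numerators are monomials unchanged by the restrictions, since $M_s$ avoids $p_{\max B_s}$, and they are distinct. Hence their coefficients vanish, and one descends on $j$.
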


\begin{proof}
Suppose that there is a nontrivial linear relation among the functions \eqref{eq:basis-element-Rnk} associated with the data defining $\mathcal B_{n,k}$; this also covers the possibility that two different data give the same function. Let $j$ be the maximal number of denominator factors occurring in it and fix a chain
\begin{equation}\label{eq:independence-chain}
I_1\subsetneq\cdots\subsetneq I_j
\end{equation}
occurring in a term of maximal denominator length. Put $B_1=I_1$, $B_s=I_s\setminus I_{s-1}$ for $2\le s\le j$, and $B_{j+1}=[n]\setminus I_j$. 

Choose in each $B_s$, $1\le s\le j$, the variable $p_{\max B_s}$ and replace it by
\begin{equation*}
q_s=p_{B_s}=\sum_{a\in B_s}p_a .
\end{equation*}
Together with the variables $p_a$, $a\in B_i\setminus\{\max B_i\}$ for $1\le i\le j$ and $a\in B_{j+1}$, the elements $q_1,\ldots,q_j$ form a coordinate system on $\Bbbk^n$; all restrictions below are taken in these coordinates. In them, a subset $J\subseteq[n]$ satisfies
\begin{equation}\label{eq:vanishing-criterion}
p_J\equiv0 \text{ on } \{q_1=\cdots=q_s=0\}\iff J\text{ is a union of some of } B_1,\ldots,B_s,
\end{equation}
since the only linear relations imposed are $p_{B_1}=\cdots=p_{B_s}=0$. In particular the denominator of the fixed chain is $q_1(q_1+q_2)\cdots(q_1+\cdots+q_j)$.

We show by induction on $s$ that, after $s$ steps, only terms whose chain contains $I_1,\ldots,I_s$ survive. Multiply the relation by $q_1$ and restrict to $q_1=0$. By \eqref{eq:vanishing-criterion} with $s=1$, the only $p_J$ vanishing on this hyperplane is $p_{I_1}$, so every term is regular after multiplication by $q_1$, and every term whose chain does not contain $I_1$ becomes zero.

Assume the claim for $s-1$ and work modulo $q_1=\cdots=q_{s-1}=0$; the
surviving terms have had the factors $p_{I_1},\ldots,p_{I_{s-1}}$ cancelled, and
their remaining denominator factors do not vanish there. Let $p_J$ be such a
factor with $p_J\equiv0$ on $q_s=0$. By \eqref{eq:vanishing-criterion}, $J$ is a
union of blocks among $B_1,\ldots,B_s$, and $J\supseteq B_s$ because $J$ does
not already vanish. As $J$ belongs to a chain containing $I_{s-1}$, it is
comparable with $I_{s-1}$, and $J\not\subseteq I_{s-1}$; hence
$J\supseteq I_{s-1}$ and therefore $J=I_s$. So multiplying by $q_s$ and
restricting to $q_s=0$ is again defined on all surviving terms and kills those
whose chain avoids $I_s$.

After $j$ steps only terms whose chain contains all sets in
\eqref{eq:independence-chain} remain; since no chain in the relation is longer
than $j$, these are exactly the terms with chain \eqref{eq:independence-chain}.
For such a term the successive restrictions leave the numerator
$M_1\cdots M_{j+1}$, unchanged by the substitutions $q_1=\cdots=q_j=0$, because
$M_i$ does not involve $p_{\max B_i}$ for $i\le j$. By the definition of
$\mathcal B_{n,k}$ these products are monomials in the independent variables
$p_a$, $a\in B_i\setminus\{\max B_i\}$, $1\le i\le j$, and $p_a$,
$a\in B_{j+1}$; distinct data with the chain \eqref{eq:independence-chain}
give distinct monomials, since the blocks are disjoint. Hence all coefficients of the terms with this chain vanish.

Repeating this for every chain of length $j$ removes all terms of maximal denominator length, and descending on $j$ shows that all coefficients vanish. In particular distinct data give distinct functions, and these functions are linearly independent.
\end{proof}

We are now ready to prove the theorem.

\begin{proof}[Proof of Theorem \ref{thm:basis}] 
By Lemma \ref{lem:basis-spans}, $\mathcal B_{n,k}$ spans $\mathcal R_{n,k}$, and by Lemma \ref{lem:basis-independent} it is linearly independent. Hence $\mathcal B_{n,k}$ is a basis of $\mathcal R_{n,k}$. Finally, every element of $\mathcal R_{n,k}$ is homogeneous of degree $2k-n+1$. These degrees are different for different values of $k$. Therefore
\begin{equation*}
\mathcal R(n)=\bigoplus_{k=0}^{n-1}\mathcal R_{n,k},
\end{equation*}
and the union of the bases $\mathcal B_{n,k}$ over $0\leq k\leq n-1$ is a basis of $\mathcal R(n)$.
\end{proof}

\section{Dimension of the rational function model}

\begin{theorem}\label{thm:dimension-recurrence}
Put $d_n=\dim\mathcal R(n)$. For $s\geq1$, let
\begin{equation*}
a_s=
\sum_{\substack{0\leq d\leq s-1\\d\equiv s-1\pmod 2}}
\binom{s+d-2}{d}\;\;\;\textrm{and}\;\;\;
b_s=\sum_{\substack{0\leq d\leq s-1\\d\equiv s-1\pmod 2}}\binom{s+d-1}{d},
\end{equation*}
where $a_1=1$. Then
\begin{equation}\label{eq:dimension-recurrence}
d_n=b_n+\sum_{s=1}^{n-1}\binom ns a_s d_{n-s}, \qquad n\geq1.
\end{equation}
In particular,
\begin{equation*}
d_1=1,\;
d_2=4,\;
d_3=22,\;
d_4=152,\;
d_5=1291,\;
d_6=13156,\;
d_7=156682,\;
d_8=2133824.
\end{equation*}
\end{theorem}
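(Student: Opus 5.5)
By Theorem \ref{thm:basis}, $d_n=\dim\mathcal R(n)$ is the number of elements of $\bigcup_k\mathcal B_{n,k}$. By Lemma \ref{lem:basis-independent}, distinct data give distinct functions, so it suffices to count data. Since the homogeneous degree $2k-n+1$ determines $k$, summing over $k$ removes the constraint \eqref{eq:basis-total-degree}. A datum for $\mathcal R(n)$ is then:
\begin{itemize}
\item a strict chain $\varnothing\subsetneq I_1\subsetneq\cdots\subsetneq I_j\subsetneq[n]$, equivalently an ordered set partition $(B_1,\ldots,B_{j+1})$ of $[n]$ into nonempty blocks;
\item for each block, a degree $d_s$ with $0\le d_s\le|B_s|-1$ and $d_s\equiv|B_s|-1\pmod 2$;
\item a monomial $M_s$ of degree $d_s$ in $|B_s|-1$ variables if $s\le j$, or in $|B_{j+1}|$ variables if $s=j+1$.
\end{itemize}
The only constraint to check is that any choice of $d_s$ with the prescribed parity yields an admissible $k$. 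Indeed, $\sum d_s+j-n+1\equiv\sum(|B_s|-1)+j-n+1=0\pmod 2$, so $k$ is an integer. Also $0\le k\le n-1$, because $\sum d_s\le n-j-1$ and $\sum d_s\ge0\ge j-n+1$ once we also use $k\ge0$ from the construction. (In fact $k$ counts the $\prec$'s in the lift of Lemma \ref{lem:basis-in-R}, so it is automatically in range.)

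\emph{Per-block counts.} The number of monomials of degree $d$ in $m$ variables is $\binom{m+d-1}{d}$. For a nonfinal block of size $s$ we have $m=s-1$, so the number of choices is $\sum_d\binom{s+d-2}{d}=a_s$. The case $s=1$ needs care: the only allowed degree is $d=0$ and there is exactly one monomial, the constant $1$, in zero variables, so $a_1=1$ (the binomial convention $\binom{-1}{0}=1$ matches this). For the final block of size $s$ we have $m=s$, giving $b_s$ choices.

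\emph{Recurrence.} Condition on the first block $B_1=I_1$. If $j=0$, then $B_1=[n]$ is the final block and contributes $b_n$. Otherwise $|B_1|=s$ with $1\le s\le n-1$, and there are $\binom ns$ ways to choose $B_1$ and $a_s$ choices for $M_1$. The remaining data form a chain on $[n]\setminus B_1$ with the same rules, of which there are $d_{n-s}$ after relabeling by an order-preserving bijection. This bijection preserves which element is $\max B_i$, and the counts depend only on block sizes in any case. This gives \eqref{eq:dimension-recurrence}.

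\emph{Numerical values.} These follow by direct computation. For example, $a_1=1$, $a_2=1$, $a_3=1+3=4$, $b_1=1$, $b_2=2$, $b_3=1+6=7$. Then $d_2=2+2\cdot1\cdot1=4$, and $d_3=7+3\cdot1\cdot4+3\cdot1\cdot1=22$, which agrees with the degree-three computation.

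The main obstacle is making the bijection between basis elements and data rigorous: distinctness is exactly Lemma \ref{lem:basis-independent}, and the remaining points are the degenerate conventions for singleton nonfinal blocks and the range of $k$.
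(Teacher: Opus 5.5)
Your proposal is correct and follows the paper's proof. You identify $d_n$ with the number of basis data, note that $k$ is determined by the datum so the total-degree condition drops out, count $j=0$ as $b_n$, and condition on the first block to get $\binom ns a_s d_{n-s}$. Your check that $k\ge 0$ needs no appeal to ``the construction'': the inequality $\sum d_s\ge 0\ge j-n+1$, which holds because $j\le n-1$, already gives it.
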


\begin{proof}
By Theorem \ref{thm:basis},
\[
d_n=
\left|
\bigcup_{k=0}^{n-1}\mathcal B_{n,k}
\right|.
\]
Recall that a basis element is determined by a strict chain
\[
\varnothing=I_0\subsetneq I_1\subsetneq\cdots
\subsetneq I_j\subsetneq I_{j+1}=[n],
\]
with blocks $B_i=I_i\setminus I_{i-1}$, together with monomials $M_i$
satisfying
\[
0\leq \deg M_i\leq |B_i|-1,
\qquad
\deg M_i\equiv |B_i|-1\pmod 2.
\]
In the union over all $k$, these are the only degree restrictions, since
\[
k=
\frac{\sum_{i=1}^{j+1}\deg M_i-j+n-1}{2}
\]
is uniquely determined; the parity and degree conditions ensure that
$k$ is an integer with $0\leq k\leq n-1$.

If $j=0$, the only block is $B_1=[n]$. For each admissible degree $d$,
there are
\[
\binom{n+d-1}{d}
\]
monomials in $p_1,\ldots,p_n$, so this case contributes $b_n$.

Now let $j\geq1$ and put $s=|B_1|$. There are $\binom ns$ choices for
$B_1$. Since $B_1$ is nonfinal, $M_1$ is a monomial in the $s-1$
variables
\[
\{p_a\mid a\in B_1\setminus\{\max B_1\}\}.
\]
Thus the number of admissible choices for $M_1$ is $a_s$ (with
$a_1=1$ when $s=1$). The remaining blocks and monomials form an
arbitrary basis datum on the remaining $n-s$ variables, giving
$d_{n-s}$ possibilities.

Hence, for fixed $s$, the contribution is $\binom ns a_s d_{n-s}$. Summing over $1\leq s\leq n-1$ and adding the case $j=0$ yields \eqref{eq:dimension-recurrence}.
\end{proof}

\section{Free Novikov-Zinbiel algebra}

Let $\mathcal F$ be the free algebra with two binary operations $\prec$ and $\succ$, and let $J$ be the operadic ideal generated by the Zinbiel identity, the two Novikov identities, and the three compatibility identities from Definition~1. Put $\mathcal{NZ}=\mathcal F/J$. Thus $\mathcal{NZ}$ is the operad of Novikov-Zinbiel algebras. Since all defining identities hold for the operations
\begin{equation*}
x\succ y=R(x)y,\qquad x\prec y=xD(y),
\end{equation*}
the map $\rho_n$ factors through the multilinear component $\mathcal{NZ}(n)$. Hence there is a surjective linear map
\begin{equation*}
\overline\rho_n:\mathcal{NZ}(n)\longrightarrow\mathcal R(n).
\end{equation*}
The defining identities preserve the number of occurrences of $\prec$, and therefore
\begin{equation*}
\mathcal{NZ}(n)=\bigoplus_{k=0}^{n-1}\mathcal{NZ}_{n,k},
\end{equation*}
where $\mathcal{NZ}_{n,k}$ is spanned by multilinear monomials containing exactly $k$ occurrences of $\prec$. The map $\overline\rho_n$ restricts to a surjective map
\begin{equation*}
\overline\rho_{n,k}:\mathcal{NZ}_{n,k}\longrightarrow\mathcal R_{n,k}.
\end{equation*}
Our goal is to prove that these maps are injective.
    
Put
\begin{equation*}
E(x,y)=x\prec y+y\prec x
\end{equation*}
and define the ternary operation
\begin{equation*}
\mu(x,y,z)=E(x,y)\succ z.
\end{equation*}
The operation $\mu$ will describe the cancellations which occur inside the blocks.

\begin{lemma}\label{lem:ternary-product}
The ternary operation $\mu$ is symmetric and satisfies
\begin{equation}\label{eq:mu-right-prec}
\mu(x,y,z)\prec u
=
\mu(x\prec u,y,z),
\end{equation}
\begin{equation}\label{eq:mu-leibniz}
x\prec\mu(y,z,u)
=
\mu(x\prec y,z,u)
+
\mu(y,x\prec z,u)
+
\mu(y,z,x\prec u),
\end{equation}
and
\begin{equation}\label{eq:mu-left-succ}
x\succ\mu(y,z,u)
=
\mu(x\succ y,z,u).
\end{equation}
Moreover,
\begin{equation}\label{eq:mu-associativity}
\mu(\mu(x,y,z),u,v)
=
\mu(x,y,\mu(z,u,v)).
\end{equation}
\end{lemma}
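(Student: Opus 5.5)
The plan is to derive every identity directly from the six defining identities of $\mathcal{NZ}$, without using the rational model, since the faithfulness of the model is exactly what is still to be proved. Throughout, write $(a,b,c)=(a\prec b)\prec c-a\prec(b\prec c)$ for the Novikov associator.

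\emph{Symmetry.} By definition $\mu(x,y,z)=E(x,y)\succ z$ is symmetric in $x,y$. Identity \eqref{eq:NZ3} reads $E(x,y)\succ z=E(x,z)\succ y$, i.e. $\mu(x,y,z)=\mu(x,z,y)$. The transpositions $(12)$ and $(23)$ generate $S_3$, so $\mu$ is fully symmetric. All later steps use this symmetry freely to move an argument into whichever slot is convenient.

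\emph{Right $\prec$-compatibility \eqref{eq:mu-right-prec}.} By \eqref{eq:NZ1}, $(E(x,y)\succ z)\prec u=E(x,y)\succ(z\prec u)$, so $\mu(x,y,z)\prec u=\mu(x,y,z\prec u)$. Applying this to $\mu(z,y,x)=\mu(x,y,z)$ gives $\mu(x,y,z)\prec u=\mu(z,y,x\prec u)=\mu(x\prec u,y,z)$.

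\emph{Left $\succ$-compatibility \eqref{eq:mu-left-succ}.} Put $e=E(z,u)$. By symmetry, $x\succ\mu(y,z,u)=x\succ(e\succ y)$ and $\mu(x\succ y,z,u)=e\succ(x\succ y)$. The Zinbiel identity expands both:
\begin{equation*}
x\succ(e\succ y)=(x\succ e)\succ y+(e\succ x)\succ y=e\succ(x\succ y).
\end{equation*}
Hence the two sides are equal.

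\emph{Associativity \eqref{eq:mu-associativity}.} Apply \eqref{eq:mu-left-succ} with the left factor $E(x,y)$:
\begin{equation*}
\mu(x,y,\mu(z,u,v))=E(x,y)\succ\mu(z,u,v)=\mu(E(x,y)\succ z,u,v)=\mu(\mu(x,y,z),u,v).
\end{equation*}

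\emph{Leibniz rule \eqref{eq:mu-leibniz}.} This is the step I expect to be the main obstacle. Apply \eqref{eq:NZ2} with $a=E(y,z)$ and $b=u$:
\begin{equation*}
x\prec(a\succ u)=(x\prec a)\succ u+(a\prec x)\succ u+(a\succ x)\prec u=\mu(x,E(y,z),u)+\mu(y,z,x)\prec u.
\end{equation*}
By \eqref{eq:mu-right-prec} and symmetry, the last term equals $\mu(y,z,x\prec u)$, which is the third term of \eqref{eq:mu-leibniz}. It remains to prove
\begin{equation*}
\mu(x,E(y,z),u)=\mu(x\prec y,z,u)+\mu(y,x\prec z,u).
\end{equation*}
It suffices to show that, in any Novikov algebra,
\begin{equation*}
E(x,E(y,z))=E(x\prec y,z)+E(y,x\prec z).
\end{equation*}
This is the abstract form of $D(xD(yz))=D\bigl(xD(y)z+yxD(z)\bigr)$ in the differential model. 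To verify it, expand both sides into eight $\prec$-monomials:
\begin{equation*}
x\prec(y\prec z)+x\prec(z\prec y)+(y\prec z)\prec x+(z\prec y)\prec x
\end{equation*}
against
\begin{equation*}
(x\prec y)\prec z+z\prec(x\prec y)+y\prec(x\prec z)+(x\prec z)\prec y.
\end{equation*}
Right commutativity gives $(y\prec z)\prec x=(y\prec x)\prec z$ and $(z\prec y)\prec x=(z\prec x)\prec y$. Then left symmetry, $(y,x,z)=(x,y,z)$ and $(z,x,y)=(x,z,y)$, should match the remaining terms. If the bare identity failed, the fallback would be to verify it only after applying $\succ u$, which is all that is needed.
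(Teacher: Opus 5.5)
Your proposal is correct. For the symmetry, \eqref{eq:mu-right-prec}, \eqref{eq:mu-associativity} and \eqref{eq:mu-leibniz} you follow the paper's argument. Applying \eqref{eq:NZ2} to $x\prec(E(y,z)\succ u)$ reduces the Leibniz rule to the same Novikov identity \eqref{eq:E-Novikov-relation}. Your sketch of that identity does close exactly: after right commutativity, the two left-symmetry relations $(y,x,z)=(x,y,z)$ and $(z,x,y)=(x,z,y)$ add up to the right-hand side term for term, so no fallback is needed.

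The genuine difference is in \eqref{eq:mu-left-succ}. The paper first derives, from \eqref{eq:NZ1} and \eqref{eq:NZ2}, the auxiliary rule $E(x\succ y,z)=x\succ E(y,z)+\mu(x,y,z)$. It then compares $\mu(x\succ y,z,u)=E(x\succ y,z)\succ u$ with the Zinbiel expansion of $x\succ(E(y,z)\succ u)$. You instead use full symmetry to put $x\succ y$ in the third slot, which turns the two sides into $x\succ(e\succ y)$ and $e\succ(x\succ y)$ with $e=E(z,u)$. You then finish with the left commutativity $a\succ(b\succ c)=b\succ(a\succ c)$ implied by the Zinbiel identity. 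Your route is shorter, and it shows that \eqref{eq:mu-left-succ}, and hence \eqref{eq:mu-associativity}, follow from the Zinbiel identity and \eqref{eq:NZ3} alone. The paper's route also produces the formula for $E(x\succ y,z)$, but that formula is not used later, so nothing is lost by your shortcut.
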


\begin{proof}
The operation $\mu$ is symmetric in $x$ and $y$ by definition.
The third compatibility identity gives
\begin{equation*}
E(x,y)\succ z
=
E(x,z)\succ y,
\end{equation*}
and hence $\mu$ is symmetric in all three arguments.

To prove \eqref{eq:mu-right-prec}, using the symmetry of $\mu$ and the first compatibility identity, we obtain
\begin{multline*}
\mu(x\prec u,y,z)=\mu(y,z,x\prec u)=E(y,z)\succ(x\prec u)\\
=(E(y,z)\succ x)\prec u=\mu(y,z,x)\prec u=\mu(x,y,z)\prec u.
\end{multline*}

We next prove \eqref{eq:mu-leibniz}. The second compatibility identity gives
\begin{equation*}
x\prec\mu(y,z,u)
=(x\prec E(y,z))\succ u+(E(y,z)\prec x)\succ u+(E(y,z)\succ x)\prec u.
\end{equation*}
The last term is $\mu(y,z,x\prec u)$. It remains to show that
\begin{equation}\label{eq:E-Novikov-relation}
x\prec E(y,z)+E(y,z)\prec x
=
E(x\prec y,z)+E(y,x\prec z).
\end{equation}
The left-symmetric Novikov identity gives
\begin{equation*}
x\prec(y\prec z)+(y\prec x)\prec z
=
(x\prec y)\prec z+y\prec(x\prec z),
\end{equation*}
and, after interchanging $y$ and $z$,
\begin{equation*}
x\prec(z\prec y)+(z\prec x)\prec y
=
(x\prec z)\prec y+z\prec(x\prec y).
\end{equation*}
Adding these equalities and using right commutativity
\begin{equation*}
(a\prec b)\prec c=(a\prec c)\prec b
\end{equation*}
gives \eqref{eq:E-Novikov-relation}. Therefore
\eqref{eq:mu-leibniz} follows.

We now prove \eqref{eq:mu-left-succ}. By the first and second
compatibility identities,
\begin{multline*}
E(x\succ y,z)=(x\succ y)\prec z+z\prec(x\succ y)
=x\succ(y\prec z)+E(z,x)\succ y+(x\succ z)\prec y\\
=x\succ(y\prec z)+\mu(x,y,z)+x\succ(z\prec y).
\end{multline*}
Thus
\begin{equation*}
E(x\succ y,z)=x\succ E(y,z)+\mu(x,y,z).
\end{equation*}
Consequently,
\begin{equation*}
\mu(x\succ y,z,u)=E(x\succ y,z)\succ u=(x\succ E(y,z))\succ u+\mu(x,y,z)\succ u.
\end{equation*}
On the other hand, the Zinbiel identity gives
\begin{equation*}
x\succ\mu(y,z,u)=x\succ(E(y,z)\succ u)=(x\succ E(y,z))\succ u +(E(y,z)\succ x)\succ u.
\end{equation*}
Since $E(y,z)\succ x=\mu(y,z,x)=\mu(x,y,z)$, we obtain \eqref{eq:mu-left-succ}.

Finally, using \eqref{eq:mu-left-succ}, we have
\begin{equation*}
\mu(x,y,\mu(z,u,v))=E(x,y)\succ\mu(z,u,v)=\mu(E(x,y)\succ z,u,v)=\mu(\mu(x,y,z),u,v).
\end{equation*}
which proves \eqref{eq:mu-associativity}.
\end{proof}

By symmetry, \eqref{eq:mu-associativity} allows an inner occurrence of $\mu$ to be moved between any two positions. Hence all iterated products of an odd number of arguments are independent of the bracketing. We shall therefore write
\begin{equation*}
\Pi(a_1,\ldots,a_{2h+1}):=\mu\bigl(\Pi(a_1,\ldots,a_{2h-1}),a_{2h},a_{2h+1}\bigr).
\end{equation*}
for their iterated product with respect to $\mu$. We also use the convention $\Pi(a)=a$.

The next lemma gives two rules which will be used repeatedly.

\begin{lemma}\label{lem:Pi-rules}
For arbitrary $a_1,\ldots,a_{2h+1},u$ and any $i$, we have
\begin{equation}\label{eq:Pi-transport}
\Pi(a_1,\ldots,a_{2h+1})\prec u=\Pi(a_1,\ldots,a_i\prec u,\ldots,a_{2h+1}).
\end{equation}
and
\begin{equation}\label{eq:Pi-leibniz}
x\prec\Pi(a_1,\ldots,a_{2h+1})=\sum_{i=1}^{2h+1}\Pi(a_1,\ldots,x\prec a_i,\ldots,a_{2h+1}).
\end{equation}
\end{lemma}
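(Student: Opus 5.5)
I would prove both rules by induction on $h$, using the recursive definition
\[
\Pi(a_1,\ldots,a_{2h+1})=\mu\bigl(\Pi(a_1,\ldots,a_{2h-1}),a_{2h},a_{2h+1}\bigr),
\]
together with Lemma~\ref{lem:ternary-product}. For $h=0$ both statements are trivial, since $\Pi(a)=a$: the first reads $a\prec u=a\prec u$ and the second reads $x\prec a=x\prec a$. Throughout, I use the remark after Lemma~\ref{lem:ternary-product}: by symmetry and \eqref{eq:mu-associativity}, $\Pi$ is independent of bracketing and is symmetric in all of its arguments. In particular, any argument $a_i$ may be moved into one of the two outer slots of the last $\mu$.

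For the transport rule \eqref{eq:Pi-transport}, fix $i$ and use the symmetry of $\Pi$ to reorder the arguments so that $a_i$ becomes the last one, $a_{2h+1}$. Then
\[
\Pi(a_1,\ldots,a_{2h+1})=\mu(\Pi',a_{2h},a_i),
\qquad \Pi'=\Pi(\ldots),
\]
where $\Pi'$ is formed from the remaining $2h-1$ arguments. By the symmetry of $\mu$ we can write this as $\mu(a_i,a_{2h},\Pi')$. Then \eqref{eq:mu-right-prec} gives
\[
\mu(a_i,a_{2h},\Pi')\prec u=\mu(a_i\prec u,a_{2h},\Pi').
\]
Undoing the reordering gives the claim. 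Strictly speaking, this step needs no induction, only the symmetry and bracket-independence of $\Pi$. One must check that this symmetry is legitimate for arbitrary elements and not only for variables. It is, because the identities of Lemma~\ref{lem:ternary-product} are identities in $\mathcal{NZ}$ and therefore hold under substitution.

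For the Leibniz rule \eqref{eq:Pi-leibniz}, apply \eqref{eq:mu-leibniz} to
\[
x\prec\mu\bigl(\Pi(a_1,\ldots,a_{2h-1}),a_{2h},a_{2h+1}\bigr).
\]
This produces three terms. In the first, $x$ acts on $\Pi(a_1,\ldots,a_{2h-1})$, and the induction hypothesis expands it as $\sum_{i\le 2h-1}\Pi(\ldots,x\prec a_i,\ldots)$. Then $\mu$ is linear in its first argument, and each resulting term $\mu(\Pi(\ldots,x\prec a_i,\ldots),a_{2h},a_{2h+1})$ equals $\Pi(a_1,\ldots,x\prec a_i,\ldots,a_{2h+1})$ by definition. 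The other two terms are exactly the summands for $i=2h$ and $i=2h+1$.

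I expect no real obstacle. The only point that needs care is the justification, in the transport step, that $\Pi$ is fully symmetric and bracket-independent. This relies on the remark following Lemma~\ref{lem:ternary-product}, which I would state explicitly as an induction using \eqref{eq:mu-associativity} and the symmetry of $\mu$.
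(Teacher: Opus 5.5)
Your proof is correct. The Leibniz rule \eqref{eq:Pi-leibniz} is proved exactly as in the paper. For the transport rule \eqref{eq:Pi-transport} you take a slightly different route.

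\emph{Paper's route.} The paper stays inside the left-nested recursion and inducts on $h$. When $i\le 2h-1$, it uses \eqref{eq:mu-right-prec} to push $\prec u$ into the first argument $\Pi(a_1,\ldots,a_{2h-1})$ and then applies the induction hypothesis. Only when $i\in\{2h,2h+1\}$ does it invoke the symmetry of $\mu$. Its argument therefore needs nothing beyond the symmetry of $\mu$.

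\emph{Your route.} You first move $a_i$ into an outer slot using the full $S_{2h+1}$-symmetry of $\Pi$, and then a single application of \eqref{eq:mu-right-prec} finishes. This is shorter once that symmetry is available. However, it rests on a fact the paper only asserts in a one-line remark after Lemma~\ref{lem:ternary-product}.

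You correctly flag this dependence, and the symmetry does hold. By induction, $\Pi$ is symmetric in $a_1,\ldots,a_{2h-1}$, and by the symmetry of $\mu$ it is symmetric in $a_{2h},a_{2h+1}$. For $h\ge 2$, put $X=\Pi(a_1,\ldots,a_{2h-3})$. The transposition of $a_{2h-1}$ and $a_{2h}$ then follows from \eqref{eq:mu-associativity}:
\[
\mu\bigl(\mu(X,a_{2h-2},a_{2h-1}),a_{2h},a_{2h+1}\bigr)=\mu\bigl(X,a_{2h-2},\mu(a_{2h-1},a_{2h},a_{2h+1})\bigr),
\]
since the right-hand side is visibly symmetric in $a_{2h-1},a_{2h}$. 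These transpositions generate $S_{2h+1}$.

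Your argument does not actually need bracket-independence, only this symmetry of the left-nested product. Your remark that everything holds for arbitrary elements is also right, since the identities of Lemma~\ref{lem:ternary-product} are consequences of the defining identities.
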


\begin{proof}
For $h=0$, both identities are immediate. For $h=1$, \eqref{eq:Pi-transport} is \eqref{eq:mu-right-prec}, together with the symmetry of $\mu$, while \eqref{eq:Pi-leibniz} is \eqref{eq:mu-leibniz}. Suppose now that $h>1$. Write
\begin{equation*}\Pi(a_1,\ldots,a_{2h+1})=\mu(\Pi(a_1,\ldots,a_{2h-1}),a_{2h},a_{2h+1}).
\end{equation*}
Formula \eqref{eq:Pi-transport} follows from \eqref{eq:mu-right-prec}, the symmetry of $\mu$, and induction. For \eqref{eq:Pi-leibniz}, apply \eqref{eq:mu-leibniz} to the last display. The first term is
\begin{equation*}
\mu(x\prec\Pi(a_1,\ldots,a_{2h-1}),a_{2h},a_{2h+1}),
\end{equation*}
and the induction hypothesis expands it into the first $2h-1$ terms of \eqref{eq:Pi-leibniz}. The remaining two terms are exactly the terms corresponding to $a_{2h}$ and $a_{2h+1}$.
\end{proof}

In particular, \eqref{eq:Pi-transport} gives the transport identity
\begin{equation}\label{eq:transport}
\Pi(a\prec u,b,a_3,\ldots,a_{2h+1})=\Pi(a,b\prec u,a_3,\ldots,a_{2h+1}).
\end{equation}

We shall also use a property of the pure Novikov part. Let $\mathcal N(B)$ denote the multilinear component of the free Novikov algebra on the variables indexed by $B$. By the standard differential realization of the free Novikov algebra \cite{DzhLofwall2002}, the coefficient map identifies $\mathcal N(B)$ with the space of homogeneous polynomials of degree $|B|-1$ in the variables $p_a$, $a\in B$. Thus every such monomial has a unique preimage in $\mathcal N(B)$.

Fix $c\in B$. A pure $\prec$-monomial is called $c$-anchored if $x_c$ lies in the left argument of every operation $\prec$ on the path from $x_c$ to the root. Equivalently, $x_c$ lies on the leftmost branch of the corresponding binary tree.

\begin{lemma}\label{lem:anchored-novikov}
Let $P\in\mathcal N(B)$. If its coefficient does not depend on $p_c$,
then $P$ is a linear combination of $c$-anchored monomials.
\end{lemma}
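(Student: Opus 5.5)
Since the coefficient map $\mathcal N(B)\to\Bbbk[p_a\mid a\in B]_{|B|-1}$ is an isomorphism, it suffices to show that the $c$-anchored monomials span, under the coefficient map, all homogeneous polynomials of degree $|B|-1$ not involving $p_c$. Then any $P$ whose coefficient is independent of $p_c$ has the same coefficient as some combination of $c$-anchored monomials, and injectivity gives equality in $\mathcal N(B)$.

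So the plan is to realize each monomial $\prod_{a\in B\setminus\{c\}}p_a^{\alpha_a}$ with $\sum\alpha_a=|B|-1$ by an explicit $c$-anchored element. The first step is to compute coefficients of anchored monomials. Along the leftmost branch, an anchored monomial has the form $(\cdots((x_c\prec V_1)\prec V_2)\cdots)\prec V_m$. By the recursion $c_{U\prec V}=p_Vc_Uc_V$, its coefficient is
\begin{equation*}
p_{V_1}\cdots p_{V_m}\,c_{V_1}\cdots c_{V_m},
\end{equation*}
which indeed never involves $p_c$. By right commutativity the order of the $V_i$ is irrelevant.

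The second step is an induction on $|B|$ that produces the monomials. Write the target monomial as $\prod_{a}p_a^{\alpha_a}$ over $a\in B\setminus\{c\}$, with total degree $|B|-1=|B\setminus\{c\}|$. I plan to partition $B\setminus\{c\}$ into groups $V_1,\dots,V_m$, each containing exactly one $a$ with $\alpha_a\ge1$ as its ``head'' $h_i$, and with $\alpha_{h_i}$ equal to $|V_i|$ (this is possible since the degrees add up correctly). For each group, the factor needed is $p_{h_i}^{|V_i|}$ from $p_{V_i}c_{V_i}$. Since $p_{V_i}$ involves all of $V_i$, the cleaner route is to choose, for each $i$, an element $W_i\in\mathcal N(V_i)$ with $p_{V_i}c_{W_i}=p_{h_i}^{|V_i|}+(\text{correction})$. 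Alternatively, I would instead use a triangular argument: order monomials suitably, show the anchored monomial $(\cdots(x_c\prec V_1)\cdots)\prec V_m$, with each $V_i$ a left-normed $h_i$-anchored chain $(\cdots(x_{h_i}\prec x_{b})\prec\cdots)$, has coefficient $\prod_i p_{V_i}\prod_{b\in V_i\setminus h_i}p_b$, whose leading term in a lex order favouring heads is the target monomial. This needs care when $\alpha_a$ are distributed unevenly.

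The main obstacle is that the span argument must cover every monomial in the $p_a$, $a\neq c$, while each factor $p_{V_i}$ is a sum rather than a variable. I would try a dimension count as the cleanest fix. The $c$-anchored elements in $\mathcal N(B)$ correspond, via the differential realization of \cite{DzhLofwall2002} with $x_c$ carrying $D^0$, to $x_c\cdot$(differential monomial in the others). Concretely, I expect the subspace of $\mathcal N(B)$ with coefficients free of $p_c$ to be the image of the map $Q\mapsto$ ``$x_c\prec$-right-multiplied by the factors of $Q$'', obtained by induction using the identity $(x_c\prec U)\prec V$. The induction closes if I show that for each $a\neq c$, the anchored elements $x_c\prec W$ with $W\in\mathcal N(B\setminus\{c\})$ together with products of smaller anchored pieces reach $p_a\cdot(\text{arbitrary degree-}(|B|-2)\text{ polynomial without }p_c)$. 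Here $p_{B\setminus c}\cdot\Bbbk[\cdot]_{|B|-2}$ is obtained from $x_c\prec W$ via Lemma \ref{lem:pure-prec-polynomials}, and splittings $V_1,V_2$ give the remaining products $p_{V_1}p_{V_2}\cdots$. The generation of all polynomials from the products $p_{V_1}\cdots p_{V_m}f_1\cdots f_m$ is the step I expect to be the main difficulty.
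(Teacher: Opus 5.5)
Your reduction is sound. Because the coefficient map on $\mathcal N(B)$ is injective (the paper uses the same fact in its last step), it suffices to show that the coefficients $\prod_i p_{V_i}c_{V_i}$ of the $c$-anchored monomials $(\cdots((x_c\prec V_1)\prec V_2)\cdots)\prec V_m$ span all homogeneous polynomials of degree $|B|-1$ in the $p_a$, $a\neq c$. You compute these coefficients correctly. However, that spanning statement is the entire content of the lemma, and the proposal never proves it, so there is a genuine gap at exactly the step you flag. The head-partition idea stops at an unspecified ``correction''. The dimension count is only announced. The triangular variant fails as stated: with left-normed chains $V_i$, the factor $p_{V_i}\prod_{b\in V_i\setminus\{h_i\}}p_b$ has degree $1$ in $p_{h_i}$. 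Hence for $|V_i|\geq 2$ it cannot contain the target $p_{h_i}^{|V_i|}$ at all; its leading term is $p_{h_i}\prod_{b}p_b$.

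Either of two arguments closes the gap.

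\emph{The paper's route.} The paper never computes a span. It proves $\mathcal N(B)=A_c+K_c$, where $K_c$ consists of elements whose coefficient is divisible by $p_c$. For $V_1$ anchored, left symmetry plus right commutativity rewrite $U\prec(V_1\prec V_2)$ as $(U\prec V_2)\prec V_1-(V_1\prec U)\prec V_2+V_1\prec(U\prec V_2)$. The paper inducts on the size of the right factor containing $x_c$, noting that $U\prec x_c\in K_c$. Then for $P=A+K$, the coefficient $c_K$ is both divisible by $p_c$ and free of $p_c$, hence zero.

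\emph{Finishing your route.} Let $S_V$ be the span of all $\prod_i p_{V_i}f_i$ over partitions $\{V_i\}$ of $V$, with $f_i$ arbitrary homogeneous of degree $|V_i|-1$ (allowed by Lemma \ref{lem:pure-prec-polynomials}). Clearly $S_XS_Y\subseteq S_{X\sqcup Y}$. Your head partition writes every target monomial as $\prod_i p_{h_i}^{|V_i|}$. So it suffices to prove $p_h^s\in S_V$ whenever $h\in V$ and $|V|=s$. This follows by induction on $s$ from
\[
p_h^s=p_V\,p_h^{s-1}-\sum_{z\in V\setminus\{h\}}p_z\,p_h^{s-1},
\]
since $p_Vp_h^{s-1}\in S_V$, $p_z\in S_{\{z\}}$ and $p_h^{s-1}\in S_{V\setminus\{z\}}$.

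This version shows explicitly which anchored elements realize a given polynomial. The paper's version stays inside the Novikov identities and needs the polynomial model only for the final injectivity step.
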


\begin{proof}
Let $A_c$ be the span of all $c$-anchored monomials. Let $K_c$ be the subspace of $\mathcal N(B)$ consisting of the elements whose coefficients are divisible by $p_c$. We first show that
\begin{equation}\label{eq:anchored-decomposition}
\mathcal N(B)=A_c+K_c.
\end{equation}
It is enough to prove this for a pure $\prec$-monomial $T$. We use lexicographic induction. The first parameter is the number of variables of $T$. If
\begin{equation*}
T=U\prec V
\end{equation*}
and $x_c$ belongs to $V$, the second parameter is the number of variables in $V$; if $x_c$ belongs to $U$, we take the second parameter to be zero.

Suppose first that $x_c$ occurs in $U$. Applying induction to $U$, we may write
\begin{equation*}
U=A+K,\qquad A\in A_c,
\end{equation*}
where the coefficient of $K$ is divisible by $p_c$. Then
\begin{equation*}
T=A\prec V+K\prec V.
\end{equation*}
The first term belongs to $A_c$, while the coefficient of the second term is divisible by $p_c$.

Suppose now that $x_c$ occurs in $V$. Applying induction to $V$, and working term by term, we may assume that either the coefficient of $V$ is divisible by $p_c$, in which case the same is true for $U\prec V$, or $V$ is $c$-anchored.

If $V=x_c$, then
\begin{equation*}
c_{U\prec x_c}=p_c\,c_U,
\end{equation*}
so $U\prec x_c\in K_c$. Otherwise,
\begin{equation*}
V=V_1\prec V_2,
\end{equation*}
where $V_1$ is $c$-anchored. The left-symmetric Novikov identity gives
\begin{equation*}
U\prec(V_1\prec V_2)=(U\prec V_1)\prec V_2-(V_1\prec U)\prec V_2+V_1\prec(U\prec V_2).
\end{equation*}
Using right commutativity in the first term, we obtain
\begin{equation*}
(U\prec V_1)\prec V_2=(U\prec V_2)\prec V_1.
\end{equation*}
The last two terms are $c$-anchored. In the first term, $x_c$ again
belongs to the right factor, but this factor is $V_1$, which contains
strictly fewer variables than $V$. Hence the second induction
parameter decreases. This proves \eqref{eq:anchored-decomposition}.

Now write
\begin{equation*}
P=A+K,
\qquad
A\in A_c,
\quad
K\in K_c.
\end{equation*}
The coefficient of every $c$-anchored monomial is independent of
$p_c$, and therefore $c_A$ is independent of $p_c$. By assumption,
$c_P$ is also independent of $p_c$. Hence
\begin{equation*}
c_K=c_P-c_A
\end{equation*}
is independent of $p_c$. At the same time, it is divisible by $p_c$.
Therefore
\begin{equation*}
c_K=0.
\end{equation*}
The differential realization of the free Novikov algebra is
injective, so $K=0$. Thus $P=A$.
\end{proof}

We now describe the normal form inside a single block.

A \emph{block expression} is a multilinear expression obtained from
variables using only $\prec$ and the ternary operation $\mu$. Let
$\mathcal C_{B,h}$ be the span of block expressions on $B$ containing
exactly $h$ occurrences of $\mu$.

If $|B|=r$, then such an expression contains
\begin{equation*}
d=r-1-2h
\end{equation*}
binary occurrences of $\prec$ outside the occurrences of $\mu$.
Consequently, its coefficient is a homogeneous polynomial of degree
$d$.

\begin{lemma}\label{lem:block-normal-form}
Let $B$ be a set of $r$ variables and let
\begin{equation*}
0\leq d\leq r-1,
\qquad
d\equiv r-1\pmod 2.
\end{equation*}
Put
\begin{equation*}
h=\frac{r-1-d}{2}.
\end{equation*}
Then the coefficient map identifies $\mathcal C_{B,h}$ with the
space of homogeneous polynomials of degree $d$ in the variables
$p_a$, $a\in B$.

More precisely, for every monomial $M$ of degree $d$ there is a
well-defined element
\begin{equation*}
\beta_B(M)\in\mathcal C_{B,h}
\end{equation*}
such that
\begin{equation}\label{eq:block-beta-coefficient}
c_{\beta_B(M)}=M,
\end{equation}
and the elements $\beta_B(M)$ form a basis of $\mathcal C_{B,h}$.
\end{lemma}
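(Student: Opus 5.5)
Here $\mathcal C_{B,h}$ is understood inside the free Novikov--Zinbiel algebra. The plan is to show that the coefficient map $c:\mathcal C_{B,h}\to\Bbbk[p_a\mid a\in B]_d$ is surjective and that $\dim\mathcal C_{B,h}\le\binom{r+d-1}{d}$. Once both hold, $c$ is an isomorphism, and we set $\beta_B(M)=c^{-1}(M)$. This preimage is well defined, and the $\beta_B(M)$ automatically form a basis of $\mathcal C_{B,h}$.

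\emph{Surjectivity.} This is essentially Lemma~\ref{lem:block-realization}, rewritten with $\mu$. Given $M$, choose $C\subseteq B$ with $|C|=d+1$ containing the variables of $M$, and take the Novikov preimage $P_0\in\mathcal N(C)$ with $c_{P_0}=M$. Writing $B\setminus C=\{a_1,b_1,\ldots,a_h,b_h\}$, the element $\Pi(P_0,x_{a_1},x_{b_1},\ldots,x_{a_h},x_{b_h})$ is a block expression with $h$ occurrences of $\mu$. By Lemma~\ref{lem:hidden-pair} its coefficient is $M$.

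\emph{Spanning set of the right size.} We show that every block expression is a linear combination of elements of the form
\[
\Pi(P,x_{a_1},\ldots,x_{a_{2h}}),\qquad P\in\mathcal N(C),\ |C|=d+1,
\]
where $C\cup\{a_1,\ldots,a_{2h}\}=B$. We then reduce further.

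First, use \eqref{eq:Pi-transport} and \eqref{eq:Pi-leibniz} to push every outer $\prec$ inside the $\Pi$'s. Use associativity \eqref{eq:mu-associativity} and symmetry to flatten nested $\mu$'s into a single $\Pi$ of $2h+1$ pure Novikov arguments $Q_0,\ldots,Q_{2h}$. Induct on the number of operations.

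Next, use the transport identity \eqref{eq:transport} to move the Novikov material out of all but one argument. If $Q_i=Q'\prec u$ is written in right-normed form (right commutativity lets us peel off a right factor), transport $u$ to argument $0$. Repeat until $Q_1,\ldots,Q_{2h}$ are single variables.

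Finally, fix the canonical position. Arrange the single variables so that the minimal elements are the loose ones, or more precisely so that the variables in $C$ are determined by $M$. To do this, use symmetry of $\Pi$ together with Lemma~\ref{lem:anchored-novikov}. If $P$ has a coefficient independent of $p_c$, rewrite $P$ as $c$-anchored monomials $x_c\prec w_1\prec\cdots$, that is, as $(\ldots(x_c\prec w_1)\ldots)\prec w_m$. Then use \eqref{eq:Pi-transport} backwards to transfer the $\prec w$'s onto a loose variable $x_{a}$, which swaps the roles of $c$ and $a$.

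This yields a normal form indexed by pairs (a choice of support set $C$, a polynomial in $\mathcal N(C)$ genuinely involving the needed variables). Comparing with monomials of degree $d$ on $B$, it gives exactly $\binom{r+d-1}{d}$ spanning elements, one per monomial. Concretely, the canonical choice is that $C$ consists of the variables of $M$ padded by the smallest unused indices, with $P$ the Novikov preimage of $M$.

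\emph{Main obstacle.} The hard step is the last one: proving that every $\Pi(P,x_{a_1},\ldots)$ with an arbitrary support $C$ reduces to the canonical support attached to its monomials. The plan is to induct on the number of variables of $C$ not appearing in the monomial. Split $c_P$ into its part divisible by $p_c$ and the part independent of $p_c$, where $c\in C$ is a padded variable that should be loose. Lemma~\ref{lem:anchored-novikov} handles the part independent of $p_c$: anchoring it and then transporting exchanges $c$ with a loose variable that should be in $C$.

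The divisible part is a problem, because it does not change which variables may appear. I would write it as a combination of monomials of the form $U\prec x_c$ and use transport to swap $x_c$ with a loose $x_a$. This turns $\Pi(U\prec x_c,x_a,\ldots)$ into $\Pi(U\prec x_a,x_c,\ldots)$, which realizes the same monomial with $p_c$ replaced by $p_a$. This bookkeeping must be checked to terminate. If it does not, the fallback is a dimension count. Show directly that $\mathcal C_{B,h}$ is spanned by elements indexed by pairs consisting of a $(d+1)$-subset $C$ together with a monomial in $C$ using every element of $C\setminus\{\text{canonical padding}\}$, and count these against $\binom{r+d-1}{d}$.
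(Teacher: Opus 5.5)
Your overall route is the paper's. You reduce every block expression to $\Pi(P,x_{a_1},\ldots,x_{a_{2h}})$ with $P$ pure Novikov, use the differential realization of $\mathcal N(C)$, and exchange core and loose variables by anchoring (Lemma~\ref{lem:anchored-novikov}) followed by transport. Your framing (``surjective plus at most $\binom{r+d-1}{d}$ spanning elements'') is equivalent to the paper's explicit $\beta_B(M)$ with span and independence.

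The gap is in the step you flag yourself. You split $c_P$ by divisibility by $p_c$. For the $p_c$-divisible part you propose rewriting $\Pi(U\prec x_c,x_a,\ldots)$ as $\Pi(U\prec x_a,x_c,\ldots)$, but this is not an identity. Transport gives $\Pi(U,x_a\prec x_c,\ldots)$ instead. The two elements you equate have coefficients $p_c c_U$ and $p_a c_U$ (times the rest), and the coefficient map factors through $\mathcal{NZ}$, so they are different elements. (Also, the $p_c$-divisible part is not spanned by elements $U\prec x_c$ once $d\ge 2$: their coefficients are $p_c c_U$ with $c_U$ free of $p_c$.) As written, termination of your reduction is left open, and the dimension-count fallback is not an argument.

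The missing point is that the divisible part never has to be moved if you decompose by monomials rather than by divisibility.
\begin{itemize}
\item Expand $P=\sum_M\lambda_M P_C(M)$ in the preimages of monomials in $\mathcal N(C)$ and treat each $M$ separately.
\item Let $C_M$ be the canonical support. Any $c\in C\setminus C_M$ lies outside $\operatorname{supp}M$. So $c_{P_C(M)}=M$ is independent of $p_c$, Lemma~\ref{lem:anchored-novikov} applies to $P_C(M)$ itself, and there is no divisible component at all.
\item Your anchoring-plus-transport manoeuvre then gives, for any $z\in C_M\setminus C$,
\begin{equation*}
\Pi(P_C(M),x_z,\ldots)=\Pi(P^{c\mapsto z},x_c,\ldots).
\end{equation*}
Since $P^{c\mapsto z}$ still has coefficient $M$, uniqueness in $\mathcal N(C')$ identifies it with $P_{C'}(M)$, where $C'=(C\setminus\{c\})\cup\{z\}$.
\item Each step lowers $|C\setminus C_M|$ by one, so the reduction terminates trivially.
\end{itemize}
This is exactly how the paper shows that $\Pi(P_C(M),x_{a_1},\ldots,x_{a_{2h}})$ does not depend on the choice of $C\supseteq\operatorname{supp}M$. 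That independence yields the spanning family $\{\beta_B(M)\}$ of the right size.
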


\begin{proof}
We first show that every block expression with $h$ occurrences of $\mu$ is a linear combination of expressions
\begin{equation}\label{eq:Pi-pure-form}
\Pi(P_1,\ldots,P_{2h+1}),
\end{equation}
where all $P_i$ are pure $\prec$-polynomials on pairwise disjoint sets of variables.

We use structural induction. The statement is immediate for a
variable. If
\begin{equation*}
T=\mu(T_1,T_2,T_3),
\end{equation*}
apply induction to the three arguments and use the total
associativity of $\mu$ to combine the resulting iterated products
into one expression of the form \eqref{eq:Pi-pure-form}.

It remains to consider
\begin{equation*}
T=T_1\prec T_2.
\end{equation*}
By induction, it is enough to consider
\begin{equation*}
T_1=\Pi(P_1,\ldots,P_{2r+1})\;\;\textrm{and}\;\;T_2=\Pi(Q_1,\ldots,Q_{2s+1}),
\end{equation*}
where all $P_i$ and $Q_j$ are pure $\prec$-polynomials. By
\eqref{eq:Pi-leibniz},
\begin{equation*}
T_1\prec T_2
=
\sum_{j=1}^{2s+1}
\Pi(
Q_1,\ldots,T_1\prec Q_j,\ldots,Q_{2s+1}
).
\end{equation*}
By \eqref{eq:Pi-transport},
\begin{equation*}
T_1\prec Q_j
=
\Pi(P_1\prec Q_j,P_2,\ldots,P_{2r+1}).
\end{equation*}
Using total associativity once more, each resulting nested iterated
product becomes an expression of the form
\eqref{eq:Pi-pure-form}. This proves the claim.

We next move all nontrivial pure Novikov structure into the first
argument of \eqref{eq:Pi-pure-form}. If, for some $i>1$,
\begin{equation*}
P_i=U\prec V,
\end{equation*}
then \eqref{eq:transport} gives
\begin{equation*}
\Pi(P_1,\ldots,U\prec V,\ldots)
=
\Pi(P_1\prec V,\ldots,U,\ldots).
\end{equation*}
The number of variables contained in non-singleton arguments outside
the first position strictly decreases. Repeating this procedure, we
obtain
\begin{equation}\label{eq:block-core-form}
\Pi(P,x_{a_1},\ldots,x_{a_{2h}}),
\end{equation}
where $P$ is a pure $\prec$-polynomial on
\begin{equation*}
r-2h=d+1
\end{equation*}
variables.

Let $C\subseteq B$ be the set of variables occurring in $P$. The
differential realization of the free Novikov algebra identifies the
multilinear component on $C$ with the space of homogeneous
polynomials of degree $d$ in the variables $p_a$, $a\in C$. Hence
$P$ is a linear combination of the unique elements corresponding to
monomials of degree $d$.

Let now $M$ be a monomial of degree $d$ in the variables indexed by
$B$. Choose a subset
\begin{equation*}
C\subseteq B,
\qquad
|C|=d+1,
\end{equation*}
containing the support of $M$. Let $P_C(M)$ denote the unique element
of the multilinear free Novikov algebra on $C$ whose coefficient is
$M$. We claim that
\begin{equation}\label{eq:beta-core-definition}
\Pi(P_C(M),x_{a_1},\ldots,x_{a_{2h}}),
\qquad
B\setminus C=\{a_1,\ldots,a_{2h}\},
\end{equation}
does not depend on the choice of $C$.

It is enough to consider two cores which differ by one variable. Let
\begin{equation*}
c\in C\setminus\operatorname{supp}M
\end{equation*}
and
\begin{equation*}
z\in B\setminus C.
\end{equation*}
Since the coefficient of $P_C(M)$ does not depend on $p_c$, Lemma
\ref{lem:anchored-novikov} allows us to choose a $c$-anchored
representative $P$ of $P_C(M)$.

For every such representative,
\begin{equation}\label{eq:core-exchange}
\Pi(P,x_z,\ldots)
=
\Pi(P^{c\mapsto z},x_c,\ldots),
\end{equation}
where $P^{c\mapsto z}$ is obtained by replacing $x_c$ by $x_z$.

We prove \eqref{eq:core-exchange} by induction on the number of
variables of $P$. For $P=x_c$, it follows from the symmetry of
$\Pi$. Otherwise,
\begin{equation*}
P=U\prec V,
\end{equation*}
where $U$ is $c$-anchored. By induction,
\begin{equation*}
\Pi(U,x_z,\ldots)
=
\Pi(U^{c\mapsto z},x_c,\ldots).
\end{equation*}
Applying $\prec V$ to both sides and using
\eqref{eq:Pi-transport}, we obtain
\begin{equation*}
\Pi(U\prec V,x_z,\ldots)
=
\Pi(U^{c\mapsto z}\prec V,x_c,\ldots),
\end{equation*}
which is \eqref{eq:core-exchange}.

The coefficient of $P^{c\mapsto z}$ is again $M$. Hence, by
uniqueness in the free Novikov differential realization,
\begin{equation*}
P^{c\mapsto z}=P_{(C\setminus\{c\})\cup\{z\}}(M).
\end{equation*}
Thus the expression \eqref{eq:beta-core-definition} is unchanged by
replacing one zero-exponent variable in the core by one outside the
core.

Any two subsets of size $d+1$ containing the support of $M$ can be
connected by a sequence of such exchanges. At each step Lemma
\ref{lem:anchored-novikov} can be applied to the zero-exponent
variable which is to be removed. Therefore
\eqref{eq:beta-core-definition} depends only on $B$ and $M$. We denote
this element by
\begin{equation*}
\beta_B(M).
\end{equation*}

By Lemma 2, or directly from the definition of $\mu$, the coefficient
of an iterated product $\Pi$ is the product of the coefficients of
its arguments. Hence
\begin{equation*}
c_{\beta_B(M)}=M,
\end{equation*}
which proves \eqref{eq:block-beta-coefficient}.

The first part of the proof shows that the elements $\beta_B(M)$
span $\mathcal C_{B,h}$. If
\begin{equation*}
\sum_M\lambda_M\beta_B(M)=0,
\end{equation*}
then applying the rational-function model gives
\begin{equation*}
\sum_M\lambda_M M=0.
\end{equation*}
Distinct polynomial monomials are linearly independent. Therefore
all $\lambda_M$ are zero, and the elements $\beta_B(M)$ form a basis.
\end{proof}

For every homogeneous polynomial $F=\sum_M\lambda_M M$ of an admissible degree, we shall use the notation
\begin{equation}\label{eq:beta-linear-extension}
\beta_B(F)=\sum_M\lambda_M\beta_B(M).
\end{equation}
By Lemma \ref{lem:block-normal-form}, $c_{\beta_B(F)}=F$. We now pass from blocks to chains. For block expressions $P_1,\ldots,P_s$, put
\begin{equation*}
[P_1|\cdots|P_s]=(((P_1\succ P_2)\succ P_3)\cdots)\succ P_s.
\end{equation*}

\begin{lemma}\label{lem:block-chains-span}
The multilinear component $\mathcal{NZ}(n)$ is spanned by block chains.
\end{lemma}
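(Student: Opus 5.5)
Here a \emph{block chain} means an expression $[P_1|\cdots|P_s]$ whose entries $P_i$ are block expressions on pairwise disjoint sets of variables covering $[n]$. We show by structural induction on a multilinear $\prec,\succ$-monomial $T$ that $T$ lies in the span $\mathcal S$ of block chains. A single variable is a block chain with $s=1$. For $T=U\succ V$ or $T=U\prec V$, the induction hypothesis lets us assume $U=[P_1|\cdots|P_s]$ and $V=[Q_1|\cdots|Q_t]$. The task then splits into two closure properties of $\mathcal S$: closure under $\succ$ and closure under $\prec$.

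\emph{Closure under $\succ$.} We mirror the shuffle structure of free Zinbiel algebras. If $t=1$, then $U\succ Q_1=[P_1|\cdots|P_s|Q_1]$ directly. For $t>1$, write $V=V'\succ Q_t$ and apply the Zinbiel identity in the form
\[
U\succ(V'\succ Q_t)=(U\succ V')\succ Q_t+(V'\succ U)\succ Q_t .
\]
Both inner products have a right factor with fewer blocks. Inducting on the total number of blocks, $U\succ V'$ and $V'\succ U$ are combinations of block chains, and appending $\succ Q_t$ keeps the result a block chain. Left-normed chains are thus closed under $\succ$; this is the usual proof that free Zinbiel algebras are spanned by left-normed monomials.

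\emph{Closure under $\prec$, right factor a block.} First take $V=Q$ a single block expression and $U=[P_1|\cdots|P_s]$. By \eqref{eq:NZ1}, $(X\succ Y)\prec Q=X\succ(Y\prec Q)$. Hence
\[
[P_1|\cdots|P_s]\prec Q=[P_1|\cdots|P_{s-1}|P_s\prec Q],
\]
and $P_s\prec Q$ is again a block expression.

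\emph{Closure under $\prec$, right factor a longer chain.} Now let $V=V'\succ Q_t$ with $t\ge2$, where $V'$ has $t-1$ blocks. By \eqref{eq:NZ2},
\[
U\prec(V'\succ Q_t)=(U\prec V')\succ Q_t+(V'\prec U)\succ Q_t+(V'\succ U)\prec Q_t .
\]
The last term is handled by closure under $\succ$ followed by the block case just treated. For the first two terms we induct on the number of blocks of the right factor. The term $U\prec V'$ has a right factor with fewer blocks, so it is covered, and appending $\succ Q_t$ is harmless.

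The main obstacle is the term $V'\prec U$, where the right factor $U$ may have many blocks, so the induction must be organised carefully. I would induct on the total number of blocks of $U$ and $V$ combined. When that is not strictly decreasing, I would exploit the symmetric combination instead. Adding the first two terms gives $E(U,V')\succ Q_t$, where $E(U,V')=U\prec V'+V'\prec U$. Using the identity
\[
E(x\succ y,z)=x\succ E(y,z)+\mu(x,y,z)
\]
from the proof of Lemma~\ref{lem:ternary-product}, one can peel blocks off $U$ and $V'$. This reduces $E(U,V')$ to a combination of a $\succ$-product and $\mu$-terms whose arguments are shorter chains. Since
\[
\mu(X,Y,Z)=E(X,Y)\succ Z ,
\]
the relation \eqref{eq:mu-left-succ} together with the symmetry of $\mu$ pushes a $\mu$ whose arguments are chains into the last block. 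The expected end state is that $E$ of two blocks merges into a single block, because $E(P,Q)\succ R=\mu(P,Q,R)$ is a block expression. Bounding this recursion so that each step lowers a suitable measure (such as the number of blocks, then the position of the innermost non-block subexpression) is where the real work lies. I expect the bookkeeping of that measure to be the delicate point.
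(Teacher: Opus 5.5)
Your closure under $\succ$ and your treatment of a single-block right factor of $\prec$ are correct. The proposal still stops short of a proof: for the term $(V'\prec U)\succ Q_t$ you only outline an $E$/$\mu$ peeling scheme and leave its termination measure open, so closure under $\prec$ is not established.

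There is no real obstacle here. If $U$ has $s$ blocks and $V=V'\succ Q_t$ has $t\ge2$ blocks, then $U\prec V'$ and $V'\prec U$ both involve $s+(t-1)<s+t$ blocks in total. This is exactly the measure you already use correctly for $\succ$-closure, and the case ``when that is not strictly decreasing'' never occurs.

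So prove that $U\prec V\in\mathcal S$ for all ordered pairs of chains by induction on $N=s+t$:
\begin{itemize}
\item For $t=1$, use \eqref{eq:NZ1} as you did; this case needs no induction hypothesis.
\item For $t\ge2$, apply \eqref{eq:NZ2}. The first two terms are covered by the induction hypothesis, after which you append $Q_t$. The term $(V'\succ U)\prec Q_t$ is covered by $\succ$-closure followed by the case $t=1$.
\end{itemize}
With this repair the detour through $E$, $\mu$ and $E(x\succ y,z)=x\succ E(y,z)+\mu(x,y,z)$ is unnecessary. The argument then uses only the Zinbiel identity, \eqref{eq:NZ1} and \eqref{eq:NZ2}. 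It even shows that chains whose blocks are pure $\prec$-monomials already span $\mathcal{NZ}(n)$.

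The paper avoids induction in the $\prec$-step by reducing in a different order. It first uses \eqref{eq:NZ1} to strip the left factor down to its last block, $(A'\succ P)\prec B=A'\succ(P\prec B)$. Only then does it apply \eqref{eq:NZ2} with a single block $P$ on the left, giving $P\prec(B'\succ Q)=\mu(P,B',Q)+B'\succ(P\prec Q)$. For $B'=B''\succ R$, the identity \eqref{eq:mu-left-succ} turns $\mu(P,B''\succ R,Q)$ directly into the chain $B''\succ\mu(R,P,Q)$.

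That route relies on the $\mu$-calculus of Lemma~\ref{lem:ternary-product}, and hence on \eqref{eq:NZ3}, and it produces $\mu$-blocks directly. Your repaired route is more elementary and yields a smaller spanning set. That set is equally adequate as input to Lemma~\ref{lem:abstract-chain-reduction}.
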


\begin{proof}
Let $\mathcal C$ be the span of all block chains. Every generator belongs to $\mathcal C$. We prove that $\mathcal C$ is closed under both operations.

First consider $\succ$. Let $A$ and $B$ be block chains. We use induction on the total number of blocks in $A$ and $B$. If $B$ consists of one block, then $A\succ B$ is already a block chain. Otherwise write
\begin{equation*}
B=B'\succ P,
\end{equation*}
where $P$ is the last block. The Zinbiel identity gives
\begin{equation*}
A\succ(B'\succ P)=(A\succ B')\succ P+(B'\succ A)\succ P.
\end{equation*}
Both products inside the parentheses involve fewer blocks, so the induction hypothesis applies. Hence $\mathcal C$ is closed under $\succ$.

We now consider $A\prec B$. If $A$ has more than one block, write
\begin{equation*}
A=A'\succ P.
\end{equation*}
The first compatibility identity gives
\begin{equation*}
A\prec B=A'\succ(P\prec B).
\end{equation*}
Thus it remains to consider the case where the left argument is a single block $P$.

If the right argument is also a single block $Q$, then
\begin{equation*}
P\prec Q
\end{equation*}
is again a block expression. Otherwise write
\begin{equation*}
B=B'\succ Q,
\end{equation*}
where $Q$ is the last block. The second compatibility identity, together with the first one, gives
\begin{equation}\label{eq:prec-chain-reduction}
P\prec(B'\succ Q)=\mu(P,B',Q)+B'\succ(P\prec Q).
\end{equation}
The second term is a block chain. If $B'$ consists of one block, then the first term is a block expression. Otherwise write
\begin{equation*}
B'=B''\succ R,
\end{equation*}
where $R$ is its last block. By symmetry of $\mu$ and
\eqref{eq:mu-left-succ},
\begin{equation*}
\mu(P,B''\succ R,Q)=B''\succ\mu(R,P,Q),
\end{equation*}
which is again a block chain.

Thus $\mathcal C$ is closed under both operations. Since it contains all generators, it coincides with the whole free
Novikov-Zinbiel algebra.
\end{proof}

By Lemma \ref{lem:block-normal-form}, every block in a chain may now be written in the form $\beta_B(F)$ for an admissible homogeneous polynomial $F$. We next reduce the polynomial of every nonfinal block to the canonical variables used in the definition of $\mathcal B_{n,k}$.

\begin{lemma}\label{lem:abstract-chain-reduction}
Every multilinear block chain is a linear combination of chains
\begin{equation}\label{eq:canonical-NZ-chain}
[\beta_{B_1}(M_1)|\cdots|\beta_{B_{j+1}}(M_{j+1})],
\end{equation}
where $[n]=B_1\sqcup\cdots\sqcup B_{j+1}$, and, for every $1\leq i\leq j$, the monomial $M_i$ involves only the variables
\begin{equation*}
\{p_a\mid a\in B_i\setminus\{\max B_i\}\}.
\end{equation*}
The final monomial $M_{j+1}$ may involve all variables of $B_{j+1}$.
\end{lemma}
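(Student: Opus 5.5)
We mirror, inside $\mathcal{NZ}$, the proof of Lemma~\ref{lem:chain-reduction}, replacing each rational identity by an identity in the free algebra. By Lemma~\ref{lem:block-chains-span} and Lemma~\ref{lem:block-normal-form}, it suffices to treat a chain $[\beta_{B_1}(P_1)|\cdots|\beta_{B_{j+1}}(P_{j+1})]$ with admissible homogeneous polynomials $P_i$. We argue by induction on the number $j$ of $\succ$-separators, and for fixed $j$ we process the nonfinal blocks one at a time.

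For a nonfinal block $B_i$, write $P_i=\overline P_i+p_{B_i}Q_i$ as in Lemma~\ref{lem:chain-reduction}. Here $\overline P_i$ avoids $p_{\max B_i}$, and $Q_i$ is homogeneous of degree $d_i-1$ with $d_i-1\equiv |B_i| \pmod 2$.

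The key step is an abstract version of the cancellation $p_{B_i}=p_{I_i}-p_{I_{i-1}}$. We aim to show that $[\cdots|\beta_{B_i}(p_{B_i}Q_i)|\cdots]$ equals a combination of chains with $j-1$ separators, so that the induction hypothesis applies.

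1. \textbf{Recognising $p_{B_i}Q_i$ as a $\prec$-product.} By Lemma~\ref{lem:pure-prec-polynomials}, realise $p_{B_i}Q_i$ by block expressions in which one factor arises as $E(U,V)$ or as a $\prec$ acting into the block. Concretely, $p_{B_i}$ is the weight of the whole block, so $\beta_{B_i}(p_{B_i}Q_i)$ should be expressible via the symmetric sum $E(\cdot,\cdot)=x\prec y+y\prec x$ appearing in $\mu$.

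2. \textbf{Absorbing the neighbouring separators.} Use the identity $E(X,Y)\succ Z=\mu(X,Y,Z)$ together with the Zinbiel identity. The relation $(x\prec y)\succ z+(y\prec x)\succ z=$ (a symmetric expression), which is the compatibility identity \eqref{eq:NZ3}, plays the role of $xyz$ in the model. We expect an identity of the form
\[
[A|\beta(p_BQ)|C]=[A\,\text{merged with}\,\beta(Q)|C]-[A|\beta(Q)\,\text{merged with}\,C],
\]
where the merges are realised through $\mu$ and \eqref{eq:mu-left-succ}. Each merged block is a block expression of admissible degree $d_i+d_{i\pm1}-1$, as in the rational computation.

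3. \textbf{Checking the outcome.} Each term now has one fewer separator, hence is handled by induction on $j$, exactly matching the degree bookkeeping in Lemma~\ref{lem:chain-reduction}.

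After this, only the terms with all $P_i$ replaced by $\overline P_i$ remain. Expanding by linearity of $\beta_{B_i}$ (see \eqref{eq:beta-linear-extension}) gives chains of the form \eqref{eq:canonical-NZ-chain}.

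The main obstacle is step (2): proving in $\mathcal{NZ}$, not merely in the model, that a block carrying the factor $p_{B_i}$ collapses with its neighbours. As a first attempt, we would prove the special case where $P_i=p_{B_i}Q_i$ with $Q_i$ a monomial. The plan is to write $\beta_{B_i}(p_{B_i}Q_i)$ as $\sum_{a}\beta_{B_i}(p_aQ_i)$, choose cores containing $a$ and the support of $Q_i$, and use \eqref{eq:Pi-leibniz} and \eqref{eq:mu-left-succ}. This should transform $X\succ\beta_{B_i}(\dots)$ into $\mu$-expressions, that is, merged blocks, minus $\beta\succ X$-type terms. The left merge comes from the Zinbiel identity, and the right merge comes from \eqref{eq:NZ1}. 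If a direct identity is elusive, a fallback is a triangularity argument. Order chains by $j$ and then by the total degree in the variables $p_{\max B_i}$, and show only that the difference between $\beta(P_i)$ and $\beta(\overline P_i)$ lies in the span of chains that are strictly smaller in this order.
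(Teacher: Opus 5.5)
\textbf{Gap: the separator-cancellation identity in $\mathcal{NZ}$ is missing.}

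Your skeleton matches the paper's: induction on the number $j$ of separators, the splitting $P_i=\overline P_i+p_{B_i}Q_i$ for each nonfinal block, and the claim that the $p_{B_i}Q_i$-part is a combination of chains with $j-1$ separators. But that claim is the whole content of the lemma, and your steps 1--2 only say it ``should'' hold.

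\begin{itemize}
\item The fallback triangularity argument restates the same claim. It cannot be obtained from the rational model, because injectivity of $\overline\rho_n$ on chains is exactly what is deduced from this lemma later. The model may be used only inside a single block, through the injectivity part of Lemma~\ref{lem:block-normal-form}. Lemma~\ref{lem:pure-prec-polynomials} is a spanning statement in the model and does not give equalities in $\mathcal{NZ}$.
\item The identity you ``expect'' has the wrong sign. From $p_{B_i}=p_{I_i}-p_{I_{i-1}}$, merging with the following block carries $+$, and merging with the preceding block carries $-$.
\item Decomposing $\beta_{B_i}(p_{B_i}Q_i)$ as $\sum_a\beta_{B_i}(p_aQ_i)$ loses the structure that makes the cancellation work.
\end{itemize}

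\textbf{The missing idea.} Realise $\beta_{B_i}(p_{B_i}q)$, for a monomial $q$ of degree $d-1$, as a single $E$-product.

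\begin{itemize}
\item Since $\deg q\le |B_i|-2$, some $a\in B_i$ has $p_a$ not occurring in $q$. Put $U=x_a$ and $V=\beta_{B_i\setminus\{a\}}(q)$. This is admissible, since $0\le d-1\le |B_i|-2$ and $d-1\equiv |B_i|-2\pmod 2$.
\item $E(U,V)=U\prec V+V\prec U$ has the same number $h$ of occurrences of $\mu$ as $\beta_{B_i}(p_{B_i}q)$, so both lie in $\mathcal C_{B_i,h}$.
\item Its coefficient is $p_{B_i\setminus\{a\}}q+p_aq=p_{B_i}q$. The injectivity in Lemma~\ref{lem:block-normal-form} therefore gives $\beta_{B_i}(p_{B_i}q)=E(U,V)$ in $\mathcal{NZ}$.
\item The Zinbiel identity, together with $E(U,V)\succ W=\mu(U,V,W)$ and the symmetry of $\mu$, yields
\[
(A\succ E(U,V))\succ W=A\succ\mu(U,V,W)-\mu(A,U,V)\succ W .
\]
\item If $A=A'\succ P$, then $\mu(A,U,V)=A'\succ\mu(P,U,V)$ by \eqref{eq:mu-left-succ}. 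If $B_i$ is the first block, simply $E(U,V)\succ W=\mu(U,V,W)$.
\end{itemize}

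Every resulting term has one separator fewer, and its merged block is again a block expression, so the induction on $j$ closes. Without this explicit identity in $\mathcal{NZ}$, or an equivalent one, your proposal does not prove the lemma.
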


\begin{proof}
We use induction on the number $j$ of separators $\succ$ between the blocks. For $j=0$ there is nothing to prove. Consider a nonfinal block $B_i$. Put $b_i=\max B_i$ and write its block expression as
\begin{equation*}
\beta_{B_i}(F_i),
\end{equation*}
where $F_i$ is homogeneous of an admissible degree $d$. Let $\overline F_i$ be obtained from $F_i$ by the substitution
\begin{equation*}
p_{b_i}=-\sum_{a\in B_i\setminus\{b_i\}}p_a.
\end{equation*}
Then
\begin{equation}\label{eq:block-polynomial-reduction}
F_i=\overline F_i+p_{B_i}Q_i,
\end{equation}
where $\overline F_i$ is homogeneous of degree $d$ and does not involve $p_{b_i}$, while $Q_i$ is homogeneous of degree $d-1$ whenever it is nonzero.

By the linearity of \eqref{eq:beta-linear-extension}, it remains to consider a monomial term $p_{B_i}q$ from the second summand in \eqref{eq:block-polynomial-reduction}. Let $r=|B_i|$. Then
\begin{equation*}
\deg q=d-1\leq r-2.
\end{equation*}
Hence some variable $p_a$, $a\in B_i$, does not occur in $q$. Put
\begin{equation*}
U=x_a\;\;\textrm{and}\;\;V=\beta_{B_i\setminus\{a\}}(q).
\end{equation*}
The degree $d-1$ is admissible for $B_i\setminus\{a\}$, because
\begin{equation*}
0\leq d-1\leq r-2,\qquad d-1\equiv r-2\pmod2.
\end{equation*}

Let
\begin{equation*}
h=\frac{r-1-d}{2}.
\end{equation*}
The element $V$ contains exactly $h$ occurrences of $\mu$, since
\begin{equation*}
\frac{(r-1)-1-(d-1)}{2}=h.
\end{equation*}
Thus
\begin{equation*}
E(U,V)=U\prec V+V\prec U
\end{equation*}
belongs to the same block space $\mathcal C_{B_i,h}$ as $\beta_{B_i}(p_{B_i}q)$. Its coefficient is
\begin{equation*}
c_{E(U,V)}=p_{B_i\setminus\{a\}}q+p_aq=p_{B_i}q.
\end{equation*}
By the injectivity in Lemma \ref{lem:block-normal-form},
\begin{equation}\label{eq:pB-block}
\beta_{B_i}(p_{B_i}q)=E(U,V).
\end{equation}

We now show that a block of the form $E(U,V)$ can be removed as a separate block. Suppose first that it is not the first block and let $A$ denote the chain preceding it, while $W$ is the following block. The Zinbiel identity gives
\begin{equation}\label{eq:separator-cancellation}
(A\succ E(U,V))\succ W=A\succ\mu(U,V,W)-\mu(A,U,V)\succ W.
\end{equation}
Indeed,
\begin{equation*}
A\succ\mu(U,V,W)=A\succ(E(U,V)\succ W)=(A\succ E(U,V))\succ W+(E(U,V)\succ A)\succ W,
\end{equation*}
and, by symmetry of $\mu$,
\begin{equation*}
E(U,V)\succ A=\mu(U,V,A)=\mu(A,U,V).
\end{equation*}

The first term on the right-hand side of \eqref{eq:separator-cancellation} merges the block $B_i$ with the following block.

For the second term, if $A$ consists of one block, then
\begin{equation*}
\mu(A,U,V)
\end{equation*}
is a single block, so $B_i$ is merged with the preceding block. If $A$ has more than one block, write
\begin{equation*}
A=A'\succ P.
\end{equation*}
By \eqref{eq:mu-left-succ},
\begin{equation*}
\mu(A,U,V)=A'\succ\mu(P,U,V),
\end{equation*}
and again the last block of $A$ is merged with $B_i$. Thus both terms in \eqref{eq:separator-cancellation} have one fewer block separator.

If $B_i$ is the first block, then simply
\begin{equation*}
E(U,V)\succ W=\mu(U,V,W),
\end{equation*}
so one separator disappears in this case as well.

If further blocks occur after $W$, the same identities are applied to the corresponding prefix of the left-associated chain, and the remaining factors are appended afterwards. Therefore every term coming from $p_{B_i}Q_i$ is a linear combination of block chains with one fewer separator. The induction hypothesis applies to all such terms.

We repeat the decomposition \eqref{eq:block-polynomial-reduction} for every nonfinal block. The only terms which retain $j$ separators are those in which every $F_i$, $1\leq i\leq j$, has been replaced by $\overline F_i$. Expanding $\overline F_i$ into monomials in
\begin{equation*}
\{p_a\mid a\in B_i\setminus\{\max B_i\}\}
\end{equation*}
and expanding the final block polynomial into monomials in all its variables gives exactly the chains \eqref{eq:canonical-NZ-chain}.
\end{proof}

We can now prove the main result.

\begin{theorem}\label{thm:NZ-rational-isomorphism}
For every $n\geq1$ and $0\leq k\leq n-1$, the induced map
\begin{equation*}
\overline\rho_{n,k}:\mathcal{NZ}_{n,k}\longrightarrow\mathcal R_{n,k}
\end{equation*}
is an isomorphism. More precisely, for every
\begin{equation*}
b=\frac{M_1\cdots M_{j+1}}{p_{I_1}\cdots p_{I_j}}\in\mathcal B_{n,k},
\end{equation*}
where $B_i=I_i\setminus I_{i-1}$, the element
\begin{equation}\label{eq:canonical-lift}
\widetilde b=[\beta_{B_1}(M_1)|\cdots|\beta_{B_{j+1}}(M_{j+1})]
\end{equation}
belongs to $\mathcal{NZ}_{n,k}$, and the elements $\widetilde b$, $b\in\mathcal B_{n,k}$, form a basis of $\mathcal{NZ}_{n,k}$.
\end{theorem}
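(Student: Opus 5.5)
The plan is a standard spanning-plus-injectivity argument. The lifts $\widetilde b$ will be shown to span $\mathcal{NZ}_{n,k}$, and their images under $\overline\rho_{n,k}$ are the basis $\mathcal B_{n,k}$ of $\mathcal R_{n,k}$, which gives independence. We have to check three things:

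\begin{itemize}
\item $\widetilde b$ lies in $\mathcal{NZ}_{n,k}$;
\item $\overline\rho_{n}(\widetilde b)=b$;
\item the $\widetilde b$ with $b\in\mathcal B_{n,k}$ span $\mathcal{NZ}_{n,k}$.
\end{itemize}

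\textbf{Membership.} Let $d_i=\deg M_i$ and $h_i=(|B_i|-1-d_i)/2$. Each $\mu$ contributes one $\prec$, and each block $\beta_{B_i}(M_i)$ contains $d_i$ further binary occurrences of $\prec$. So the block $\beta_{B_i}(M_i)$ has $d_i+h_i$ occurrences of $\prec$ and $h_i$ of $\succ$. The chain adds $j$ more occurrences of $\succ$. The degree condition \eqref{eq:basis-total-degree} then gives $\sum_i (d_i+h_i)=k$, exactly as in the computation of Lemma~\ref{lem:basis-in-R}. Hence $\widetilde b\in\mathcal{NZ}_{n,k}$.

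\textbf{Image.} The coefficient of a left-normed chain $[P_1|\cdots|P_{j+1}]$ is $c_{P_1}\cdots c_{P_{j+1}}/(p_{I_1}\cdots p_{I_j})$, by induction using $c_{U\succ V}=c_Uc_V/p_U$. Combined with \eqref{eq:block-beta-coefficient}, this gives $\overline\rho_n(\widetilde b)=b$. Consequently, the family $\{\widetilde b\}$ is linearly independent: a relation among the $\widetilde b$ maps to the same relation among the elements of $\mathcal B_{n,k}$, and these are linearly independent by Theorem~\ref{thm:basis}. In particular the $\widetilde b$ are pairwise distinct.

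\textbf{Spanning.} By Lemma~\ref{lem:block-chains-span}, $\mathcal{NZ}(n)$ is spanned by block chains. By Lemma~\ref{lem:block-normal-form}, every block can be rewritten as a combination of $\beta_B(F)$ with admissible $F$. Lemma~\ref{lem:abstract-chain-reduction} then reduces every chain to chains of the form \eqref{eq:canonical-NZ-chain}. These rewrites use only the defining identities, which preserve the number of occurrences of $\prec$, so the grading by $k$ is respected. It remains to see that every canonical chain \eqref{eq:canonical-NZ-chain} lying in $\mathcal{NZ}_{n,k}$ is some $\widetilde b$.

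\begin{itemize}
\item \emph{Ordering of the blocks.} A canonical chain comes with an ordered partition $B_1,\ldots,B_{j+1}$. This determines the strict chain $I_s=B_1\cup\cdots\cup B_s$.
\item \emph{Admissibility of the monomials.} Admissibility of each $M_i$ (the bounds and parity in \eqref{eq:basis-degree-conditions}) is built into the definition of $\beta_{B_i}$. The variable restriction on nonfinal blocks is exactly the conclusion of Lemma~\ref{lem:abstract-chain-reduction}.
\item \emph{Total degree.} The condition \eqref{eq:basis-total-degree} follows from the count of occurrences of $\prec$ reversed from the membership step: a chain in $\mathcal{NZ}_{n,k}$ has $\sum_i(d_i+h_i)=k$, which forces $\sum d_i=2k+j-n+1$.
\end{itemize}

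So every canonical chain in $\mathcal{NZ}_{n,k}$ is $\widetilde b$ for some $b\in\mathcal B_{n,k}$. Hence the $\widetilde b$ span $\mathcal{NZ}_{n,k}$, they form a basis, and $\overline\rho_{n,k}$ maps this basis bijectively onto $\mathcal B_{n,k}$, so it is an isomorphism.

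The main obstacle is checking that the reductions of Lemmas~\ref{lem:block-chains-span} and~\ref{lem:abstract-chain-reduction} really terminate inside the span of canonical chains. In particular, after a merge the new block must again be rewritten via Lemma~\ref{lem:block-normal-form} before the induction on $j$ resumes. I would make this explicit by an induction on the number $j$ of separators, renormalizing the blocks after each step.
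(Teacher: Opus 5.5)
Your proposal is correct and follows the paper's proof essentially step by step. Membership comes from the count $\sum_i(d_i+h_i)=k$; spanning comes from Lemmas~\ref{lem:block-chains-span}, \ref{lem:block-normal-form} and \ref{lem:abstract-chain-reduction} together with the fact that the defining identities preserve the number of $\prec$; and independence comes from $\overline\rho_{n,k}(\widetilde b)=b$ combined with Theorem~\ref{thm:basis}. The renormalization you flag as the main obstacle is already handled in the paper, because the induction in Lemma~\ref{lem:abstract-chain-reduction} runs over arbitrary block chains and each inductive step begins by rewriting every block as $\beta_{B_i}(F_i)$.
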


\begin{proof}
We first verify the grading of \eqref{eq:canonical-lift}. Put $d_i=\deg M_i$ and
\begin{equation*}
h_i=\frac{|B_i|-1-d_i}{2}.
\end{equation*}
By the construction in Lemma \ref{lem:block-normal-form}, $\beta_{B_i}(M_i)$ contains exactly $h_i$ occurrences of $\mu$. After expanding every occurrence of $\mu$, each of them contributes one occurrence of $\prec$ and one occurrence of $\succ$. The pure Novikov core contains $d_i$ occurrences of $\prec$. Hence every monomial occurring in $\beta_{B_i}(M_i)$ contains $d_i+h_i$ occurrences of $\prec$. Therefore the number of occurrences of $\prec$ in \eqref{eq:canonical-lift} is
\begin{align*}
\sum_{i=1}^{j+1}(d_i+h_i)=\frac{\sum_{i=1}^{j+1}d_i+n-j-1}{2}=\frac{2k+j-n+1+n-j-1}{2}=k.
\end{align*}
Thus $\widetilde b\in\mathcal{NZ}_{n,k}$.

By Lemma \ref{lem:block-chains-span}, every multilinear element of $\mathcal{NZ}(n)$ is a linear combination of block chains. By Lemma \ref{lem:abstract-chain-reduction}, every such chain is a linear combination of canonical chains of the form \eqref{eq:canonical-lift}. All defining identities preserve the number of occurrences of $\prec$, so an element of $\mathcal{NZ}_{n,k}$ is reduced only to canonical chains containing exactly $k$ occurrences of $\prec$.

For a canonical chain let $d_i=\deg M_i$. The preceding count gives
\begin{equation*}
k=\frac{\sum_{i=1}^{j+1}d_i+n-j-1}{2}\;\;\;\textrm{or}\;\;\; \sum_{i=1}^{j+1}d_i=2k+j-n+1.
\end{equation*}
Thus its chain, blocks, and monomials satisfy exactly the defining conditions of $\mathcal B_{n,k}$. Hence the elements \eqref{eq:canonical-lift}, with $b\in\mathcal B_{n,k}$, span $\mathcal{NZ}_{n,k}$.

It remains to prove their linear independence. By Lemma
\ref{lem:block-normal-form},
\begin{equation*}
c_{\beta_{B_i}(M_i)}=M_i.
\end{equation*}
The successive blocks in \eqref{eq:canonical-lift} are joined by $\succ$. Since $B_1\sqcup\cdots\sqcup B_s=I_s$, we obtain
\begin{equation*}
\overline\rho_{n,k}(\widetilde b)=\frac{M_1\cdots M_{j+1}}{p_{I_1}\cdots p_{I_j}}=b.
\end{equation*}

Suppose that $\sum_{b\in\mathcal B_{n,k}}\lambda_b\widetilde b=0$. Applying $\overline\rho_{n,k}$ gives $\sum_{b\in\mathcal B_{n,k}}\lambda_b b=0$. By Theorem \ref{thm:basis}, the elements of $\mathcal B_{n,k}$ are linearly independent. Therefore $\lambda_b=0$ for every $b\in\mathcal B_{n,k}$.

Thus the elements $\widetilde b$, $b\in\mathcal B_{n,k}$, form a basis of $\mathcal{NZ}_{n,k}$, and $\overline\rho_{n,k}$ maps this basis bijectively onto $\mathcal B_{n,k}$. Hence $\overline\rho_{n,k}$ is an isomorphism.
\end{proof}

Taking the direct sum over $k$ gives the following consequence.

\begin{corollary}\label{cor:NZ-rational-isomorphism}
For every $n\geq1$, the map
\begin{equation*}
\overline\rho_n:\mathcal{NZ}(n)\longrightarrow\mathcal R(n)
\end{equation*}
is an isomorphism. In particular,
\begin{equation*}
\dim\mathcal{NZ}(n)=\dim\mathcal R(n).
\end{equation*}
\end{corollary}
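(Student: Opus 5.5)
The corollary follows by assembling the graded pieces described in Theorem~\ref{thm:NZ-rational-isomorphism}. The plan is to use the two direct sum decompositions
\begin{equation*}
\mathcal{NZ}(n)=\bigoplus_{k=0}^{n-1}\mathcal{NZ}_{n,k},
\qquad
\mathcal R(n)=\bigoplus_{k=0}^{n-1}\mathcal R_{n,k},
\end{equation*}
and to check that $\overline\rho_n$ is the direct sum of the maps $\overline\rho_{n,k}$.

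The first decomposition holds because all defining identities of $\mathcal{NZ}$ are homogeneous in the number of occurrences of $\prec$. The ideal $J$ is therefore graded, so $\mathcal{NZ}(n)$ inherits the grading of $\mathcal F(n)$ by this number.

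The second decomposition was established in the proof of Theorem~\ref{thm:basis}. Every element of $\mathcal R_{n,k}$ is homogeneous of degree $2k-n+1$, and these degrees are distinct for distinct $k$. Hence the sum of the $\mathcal R_{n,k}$ is direct.

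Next, $\overline\rho_n$ sends the class of a monomial with $k$ occurrences of $\prec$ to an element of $\mathcal R_{n,k}$, by the definition of $\mathcal R_{n,k}$. So $\overline\rho_n$ maps $\mathcal{NZ}_{n,k}$ into $\mathcal R_{n,k}$, and its restriction there is $\overline\rho_{n,k}$. Theorem~\ref{thm:NZ-rational-isomorphism} says each $\overline\rho_{n,k}$ is an isomorphism. A direct sum of isomorphisms between corresponding summands is an isomorphism, so $\overline\rho_n$ is bijective.

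Comparing dimensions then gives $\dim\mathcal{NZ}(n)=\dim\mathcal R(n)$, which Theorem~\ref{thm:dimension-recurrence} computes explicitly.

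The only point needing care is that the sum $\bigoplus_k\mathcal R_{n,k}$ is genuinely direct inside $\Bbbk(p_1,\ldots,p_n)$. This is the homogeneity-degree argument already given, so no real obstacle remains.

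Finally, $\ker\rho_n$ equals the image of $J(n)$ in $\mathcal F(n)$. Hence every multilinear identity of the construction follows from the six defining identities.
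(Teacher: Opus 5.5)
Your proposal is correct and is the paper's argument: the paper gets the corollary by taking the direct sum over $k$ of the isomorphisms $\overline\rho_{n,k}$ from Theorem~\ref{thm:NZ-rational-isomorphism}. You have spelled out the two facts this uses, namely that $J$ is homogeneous in the number of occurrences of $\prec$, and that $\sum_k\mathcal R_{n,k}$ is direct because its summands are homogeneous of the distinct degrees $2k-n+1$. Your closing remark that $\ker\rho_n=J(n)$ also holds, since $\rho_n$ factors through $\mathcal{NZ}(n)=\mathcal F(n)/J(n)$ followed by the injective map $\overline\rho_n$; this is the content of the paper's next corollary.
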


Finally, we obtain the completeness of the defining identities.

\begin{corollary}\label{cor:complete-identities1}
 Let $A$ be a commutative-associative algebra with an invertible derivation $D$, and put $R=D^{-1}$. Define 
 \begin{equation*}
x\succ y=R(x)y,\qquad x\prec y=xD(y).
\end{equation*}
Then $(A,\succ,\prec)$ is a free Novikov-Zinbiel algebra. 
 \end{corollary}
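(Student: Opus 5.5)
The corollary literally says that $(A,\succ,\prec)$ is a free Novikov-Zinbiel algebra. This cannot hold for every such $A$; for instance the truncations $A_N$ and $t\Bbbk[t]$ are not free. The intended content, consistent with the abstract, is that the variety generated by these algebras is the whole variety of Novikov-Zinbiel algebras. In other words, every multilinear identity satisfied by $x\succ y=R(x)y$ and $x\prec y=xD(y)$ in all such $A$ is a consequence of the six defining identities. Equivalently, a suitable such algebra generated by $n$ elements is a model in which the free algebra embeds multilinearly. I would prove this reading.

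First, let $f\in\mathcal F(n)$ be a multilinear identity holding in every commutative-associative algebra with invertible derivation. In particular it holds in $A=t\Bbbk[t]$ with $D=t\frac{d}{dt}$ (Example~\ref{ex2}). Substituting $x_i=t^{p_i}$ for all positive integers $p_1,\ldots,p_n$ shows that $\rho_n(f)$ vanishes on all positive integer points. Since $\rho_n(f)$ is a rational function, it has a nonzero denominator, which cannot vanish on all of $\mathbb Z_{>0}^n$ in characteristic zero. Clearing that denominator, the numerator is a polynomial vanishing on the Zariski-dense set $\mathbb Z_{>0}^n$, so $\rho_n(f)=0$.

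Second, by Corollary~\ref{cor:NZ-rational-isomorphism}, the map $\overline\rho_n$ is injective on $\mathcal{NZ}(n)=\mathcal F(n)/J(n)$. Since $\rho_n=\overline\rho_n\circ\pi$, the condition $\rho_n(f)=0$ forces $f\in J(n)$, so $f$ follows from the defining identities. Conversely, all defining identities hold in every such $A$ by the computations of Section~2, so the multilinear identities of the construction are exactly $J$.

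Third, to pass from multilinear identities to all identities, I would use the standard characteristic-zero fact that every identity is equivalent to a set of multilinear ones, obtained by linearization and multihomogeneous components. Hence the T-ideal of identities of the construction equals the T-ideal generated by the six identities. Consequently the free Novikov-Zinbiel algebra embeds into a product of such algebras. More concretely, the multilinear part of the free algebra on $x_1,\ldots,x_n$ maps injectively into $t\Bbbk[t]$-valued functions via $x_i\mapsto t^{p_i}$ with $p_i$ varying. This is the precise sense of "free".

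The main obstacle is the density step: checking that evaluation at positive integer exponents detects rational functions, including denominators such as $p_I$ that never vanish at positive points. The remaining work is just stating the corollary correctly, since the literal freeness of each individual $A$ is false.
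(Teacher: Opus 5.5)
Your proposal is correct and is essentially the paper's argument. The paper also proves only that the multilinear identities $I(n)$ of the construction equal $J(n)$, via $J(n)\subseteq I(n)\subseteq\ker\rho_n$ (using $t\Bbbk[t]$) and $\ker\rho_n=J(n)$ from the isomorphism $\overline\rho_n$. So your reading of ``free'' matches what is actually proved, and you are right that the literal claim fails, e.g.\ for $A_N$. Your Zariski-density argument and the linearization step to arbitrary identities make explicit what the paper leaves implicit.
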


\begin{proof}
Let $I(n)$ be the space of all multilinear identities satisfied by the construction. Since the six defining identities hold in every such algebra, $J(n)\subseteq I(n)$. On the other hand, the algebra $t\Bbbk[t]$ belongs to this class, and therefore $I(n)\subseteq\ker\rho_n$. By Theorem~3, $\ker\rho_n=J(n)$. Hence $I(n)=J(n)$.
\end{proof}

\subsection*{Acknowledgments}
This research was funded by the Science Committee of the Ministry of Education and Science of the Republic of Kazakhstan (Grant No. AP32517747).

\end{document}